\newtheorem{theorem}{Theorem}[section]
\newtheorem{lemma}[theorem]{Lemma}
\theoremstyle{definition}
\newtheorem{definition}[theorem]{Definition}
\newtheorem{example}[theorem]{Example}
\newtheorem{prop}{Proposition}[section]
\theoremstyle{remark}
\newtheorem{remark}{\bf{Remark}}[section]
\numberwithin{equation}{section}
\begin{document}

\title{ON CHAOTIC BEHAVIOR OF  ASH ATTRACTORS}

\author{Elias Rego}
\address{Department of Mathematics, Southern University of Science and Technology, Guangdong  Shenzhen, China.}
\email{elias@im.ufrj.br}

\author{Kendry J. Vivas}
\address{Instituto de Matem\'aticas, Pontificia Universidad Cat\'olica de Valpara\'iso, Valpara\'iso, Chile.}
\email{kendry.vivas01@ucn.cl}
\thanks{The second author was supported by ANID/FONDECYT Postdoctorado/3220009, Chile.}

\subjclass[2010]{Primary 37C10, 37D30; Secondary 37D45.}


\keywords{Flow, Attractor, Asymptotically sectional-hyperbolic, Rescaling expansive}

\begin{abstract}
The asymptotic sectional hyperbolicity is a weak notion of hyperbolicity that extends properly the sectional-hyperbolicity and includes the Rovella attractor as a archetypal example. The main feature of this definition is the existence of arbitrarily large hyperbolic times for points outside the stable manifolds of the singularities. In this paper we will prove that any attractor associated to a $C^1$ vector field $X$ on a three-dimensional manifold satisfying this kind of hyperbolicity is rescaling expansive and presents sensitiveness respect to initial conditions.  
\end{abstract}

\maketitle

\section{Introduction}

The notions of expansiveness and sensitiveness have been widely studied in dynamical systems since it occurs in dynamics exhibiting chaotic behavior. In simple terms, these properties allows us to distinguish orbits within a dynamical system. For instance, in systems with discrete time, expansiveness means that the orbits of different points eventually are uniformly separated at some iterate $n$, while for continuous flows the definition is more complicated.  

The most well-known definition of expansiveness for flows was introduced by Bowen and Walters in \cite{BW} by using time-reparametrizations, which is usually named $BW$-\textit{expansiveness}. They showed that hyperbolic sets are examples of systems satisfying this definition. Besides, the singularities of systems with this property must been isolated from regular orbits. This shows, in particular, that the geometric Lorenz attractor (GLA), constructed in an independent way by Afraimovich, Bykov and Shilnikov and Guckenheimer and \cite{ABS} in \cite{G} respectively, is not $BW$-expansive, so it was necessary to have a suitable definition of expansiveness both for this set and for other singular flows. In this way, Komuro in \cite{K} introduced the notion of $\mathcal{K}^*$-\textit{expansiveness}, which is equivalent to $BW$-expansiveness for non-singular flows, and showed that the GLA satisfied this dynamical property. On the other hand, in \cite{Mo3} was showed that this attractor is an example of systems satisfying the notion of singular hyperbolcity. Motivated by this result, the authors in \cite{AP} proved that every singular-hyperbolic attractor associated to a $C^1$ vector fields on a three-dimensional manifold also satisfy this kind of expansiveness. Moreover, they showed, as a direct consequence, that these systems are also sensitive to initial conditions. To our best knowledge, it is still not known if singular hyperbolicity implies $\mathcal{K}^*$-expansiveness in higher dimensional manifolds.

Now, inspired in the work of Liao about standard systems of differential equations \cite{L1},\cite{L2}, X. Wen and L. Wen in \cite{WW} introduced a new notion of expansiveness which is called \textit{rescaled-expansiveness} (or $R$-expansiveness for short). The main feature of these systems is to made a rescaling in the size of the tubular neighborhood of a regular orbit by the size of its velocity vector field. Among the main results of \cite{WW} it was shown that singular-hyperbolic systems are examples of $R$-expansive systems in any dimension.

At this point it is worth to mention that despite the similarity in the definitions  of $\mathcal{K}^*$-expansiveness and $R$-expansiveness  (see section 2), these concepts are in fact distinct. Indeed, the trivial identity flow and the GLA are examples of $R$-expansive flows, but only the GLA is $\mathcal{K}^*$-expansive (See \cite{WW}). This illustrates that the set of $R$-expansive systems contains properly the singular-hyperbolic systems. On the other hand, $R$-expansiveness and $\mathcal{K}^*$-expansiveness are closely related properties for flows whose singularities exhibit nice structure. For precise information about these similarities we reefer the reader to the work  \cite{A}, where it is studied the relation between $R$-expansiveness,  $\mathcal{K}^*$-expansiveness and the separating property.

  According to the results given by Artigue in \cite{A} and by Carrasco and San Mart\'in in \cite{CSM}, it is possible to show that the Rovella attractor is an example of a system with hyperbolic singularities that is $R$-expansive, but it is not singular-hyperbolic. On the other hand, in \cite{SMV2} it was showed that this attractor is a prototype example of systems that are Asymptotically Sectional-Hyperbolic (ASH), which is a weak notion of hyperbolicity that extends properly the singular-hyperbolicity for three-dimensional vector fields and whose main feature is the existence of arbitrarily large hyperbolic times in every point outside the stable manifolds of its singularities. In this way, it is natural to ask if any three-dimensional asymptotically sectional-hyperbolic attractor satisfies this dynamical property. In this paper we will give an affirmative answer to this question for $C^1$ vector fields. Besides, we will prove that ASH attractors are also sensitive to initial data. As we will see, this property is not obtained as a direct consequence of $R$-expansiveness, so that we will present a separate proof for this result.

\section{Statements of the results}
Throughout this paper we denote by $M$ to a compact Riemannian manifold endowed with the metric $d$ induced by the Riemannian metric $\Vert\cdot\Vert$. By a {\it{flow}} on $M$ we refer to the one-parameter family of maps $X_t$, $t\in\mathbb{R}$, induced by a vector field $X$ on $M$ of class $C^k$, $k\geq 1$. For $x\in M$, by an {\it{orbit}} of $x$ we will refer to the set $\mathcal{O}(x)=\lbrace X_t(x) : t\in\mathbb{R}\rbrace$. Given an interval $[a,b]\subset\mathbb{R}$ and a subset $A\subset M$, we set $X_{[a,b]}(A)=\bigcup_{t\in [a,b]}X_t(A)$. By $Sing(X)$ we will denote the set of singularities (i.e. zeros of $X$). 
By \textit{regular orbit} we mean to an orbit which is not associated to singularities. We will say that a subset $\Lambda$ of $M$ is {\it{invariant }} if $X_t(\Lambda)=\Lambda$  for all $t\in\mathbb{R}$. Recall that that a compact invariant set $\Lambda$ is {\it{attracting}} if there exists a neighborhood $U$ of $\Lambda$ (called trapping region) such that $X_t(\overline{U})\subset U$ for any $t>0$ and
\begin{displaymath}
\Lambda=\bigcap_{t\geq 0}X_t(U).
\end{displaymath}
An {\it{attractor}} is a set which is attracting and transitive, i.e, there is $z\in\Lambda$ such that $\omega(z)=\Lambda$.   

Now, a compact invariant set $\Lambda$ has a {\it{dominated splitting}} if there are a continuous invariant splitting $T_{\Lambda}M=E\oplus E^c$ and positive numbers $K$ and $\lambda$ such that 
\begin{displaymath}
\frac{\Vert DX_t(x)\vert_{E_x}\Vert}{m(DX_t(x)\vert_{E^c_x})}\leq Ke^{-\lambda t},\quad\forall x\in\Lambda,\forall t>0.
\end{displaymath} 
Here, $m(A)$ denotes the minimum norm of the linear map $A$. In this case we say that $E^c$ is {\it{dominated}} by $E$. If $E$ is uniformly contracting, i.e., $\Vert DX_t(x)\vert_{E_x}\Vert\leq Ke^{-\lambda t}$ for every $t>0$ and $x\in\Lambda$, we say that $\Lambda$ is \textit{partially hyperbolic}. 

With the purpose to extend the notion of hyperbolicity to include the geometric Lorenz attractor as a prototype example, the authors in \cite{Mo3} introduced the notion of {\it{singular-hyperbolic}} set as a compact invariant partially hyperbolic set $\Lambda$ whose singularities are hyperbolic for which the central subbundle $E^c$ is {\it{volume expanding}}, i.e., there are positive numbers $K,\lambda$ such that 
\begin{displaymath}
\vert \det DX_t(x)\vert_{E^c_x}\vert\geq Ke^{\lambda t},\quad\forall x\in\Lambda,\forall t> 0.
\end{displaymath}
On the other hand, the notion of sectional hyperbolicity was introduced in \cite{Me3} to study the dynamic behavior of certain higher-dimensional flows. More precisely, an invariant compact partially hyperbolic set $\Lambda$ is \textit{sectional-hyperbolic} if all its singularities are hyperbolic and the central subbundle $E^c$ is {\it{sectional expanding}}, i.e, there are positive numbers $K,\lambda$ such that 
\begin{displaymath}
\vert \det DX_t(x)\vert_{L_x}\vert\geq Ke^{\lambda t},\quad\forall x\in\Lambda,\forall t> 0,
\end{displaymath}    
where $L_x$ is a two-dimensional subspace of $E^c_x$. The notions of sectional and singular hyperbolicity agree in three-dimensional manifolds, whereas in higher dimensions there exists vector fields with singular-hyperbolic sets which are not sectional-hyperbolic (see \cite{Me3}).  

On the other hand, a well known fact says that for any hyperbolic singularity $\sigma$ of $X$, its stable and unstable sets $W^s(\sigma)$, $W^u(\sigma)$ are manifolds of class $C^k$, which are tangent to $E_{\sigma}^s$ and $E_{\sigma}^u$ at $\sigma$  respectively. Let   
\begin{displaymath}
W^s(Sing(X))=\bigcup_{\sigma\in Sing(X)}W^s(\sigma).
\end{displaymath}
With this in mind, let us remember the notion of \textit{asymptotically sectional-hyperbolic set}, introduced in \cite{Mo4}: 

\begin{definition}
Let $\Lambda$ be an invariant compact partially hyperbolic set of a vector field $X$ whose singularities are hyperbolic.  We say that $\Lambda$ is {\it{asymptotically sectional-hyperbolic}} (ASH for short) if the central subbundle is eventually asymptotically expanding outside the stable manifolds of the singularities, i.e, there exists $C>0$ such that 
\begin{equation}\label{ash}
\limsup_{t\to+\infty}\frac{\log\vert \text{det}(DX_t(x)\vert_{L_x})\vert}{t}\geq C,
\end{equation}
for every $x\in \Lambda'=\Lambda\setminus W^s(Sing(X))$ and every two-dimensional subspace $L_x$ of $E^c_x$.
\end{definition}

A direct consequence of ASH property is the existence of arbitrarily large ``hyperbolic times''. Namely, for every $x$ in $\Lambda'$ and every two-dimensional subspace $L_x$ of $E_x^c$ there is an unbounded increasing sequence of positive numbers $t_k=t_k(x,L_x)>0$ such that
\begin{equation}\label{ht}
\lim_{k\to\infty}t_k=\infty\quad\text{and }\quad\vert \det DX_{t_k}(x)\vert_{L_x}\vert\geq e^{Ct_k},\quad k\geq 1.
\end{equation}
A sequence that satisfies the above relation will be called $C$-\textit{hyperbolic times} for $x$.

\begin{remark}
It follows by definition that every singular-hyperbolic set is asymptotically sectional-hyperbolic but not conversely, i.e., the asymptotically sectional-hyperbolicity extends properly the notion of sectional-hyperbolicity. 
\end{remark}
\begin{remark}
In \cite{SMV2} was proved that every asymptotically sectional-hyperbolic set satisfies the Hyperbolic Lemma. 
\end{remark}

From now on we only consider $C^1$ vector fields $X$ on three-dimensional Riemannian manifolds $M$. Let consider $\sigma\in\text{Sing}(X)$. We say that $\sigma$ is \textit{attached} to $\Lambda$ if it is accumulated by regular orbits in $\Lambda$. On the other hand, we say that $\sigma$ is a \textit{real index two singularity} if it has three real eigenvalues satisfying  $\lambda_{ss}<\lambda_s<0<\lambda_u$. In this case, we denote by $W^{ss}(\sigma)$ to the strong stable manifold of $\sigma$ associated to $\lambda_{ss}$. A real index two singularity is called 
\begin{itemize}
\item \textit{Lorenz-like} if satisfies the central expanding condition $0<-\frac{\lambda_s}{\lambda_u}<1$.  
    \item {\it{Rovella-like}} if it satisfies the central contracting condition $-\frac{\lambda_s}{\lambda_u}>1$.
    \item {\it{Resonant}} if it satisfies the relation $-\frac{\lambda_s}{\lambda_u}=1$.
\end{itemize}
It is well known that any attached singularity $\sigma$ contained in a singular-hyperbolic connected sets $\Lambda$ is Lorenz-like and satisfies $\Lambda\cap W^{ss}(\sigma)=\lbrace\sigma\rbrace$ (see \cite{B}). Nevertheless, this is not the case in the context of asymptotic sectional-hyperbolicity, e.g. in \cite{SMV1} was exhibited a connected invariant set whose singularities are sinks. For ASH sets we have the following result whose proof is analogous to that given in Theorem A in \cite{Mo3}: 

\begin{theorem}\label{T1}
Let $\Lambda$ be a nontrivial asymptotically sectional-hyperbolic set of $X$ and assume that $\Lambda$ is not hyperbolic. Then, $\Lambda$ has at least one attached singularity. In addition, if $\Lambda$ is transitive, the following holds for $X$: Each singularity $\sigma$ of $\Lambda$ is either Lorenz-like, Rovella-like or resonant and satisfies 
\begin{equation}\label{C}
\Lambda\cap W^{ss}(\sigma)=\lbrace\sigma\rbrace.
\end{equation}
\end{theorem}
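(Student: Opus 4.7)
The plan is to adapt the strategy of Theorem~A in \cite{Mo3} to the ASH setting; the only substantive difference is that the central subbundle may fail to be uniformly volume expanding at the singularities themselves, and this is precisely what allows Rovella-like and resonant singularities to appear. I would split the work into three steps: existence of an attached singularity; under transitivity, the trivial intersection $\Lambda\cap W^{ss}(\sigma)=\{\sigma\}$; and the eigenvalue classification.

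For the first step I argue the contrapositive. Suppose no singularity of $\Lambda$ is attached. Then $\text{Sing}(X)\cap\Lambda$ consists of finitely many isolated points and the closed invariant subset $\Lambda_0=\Lambda\setminus\text{Sing}(X)$ has $\Vert X\Vert$ bounded below by a positive constant. On $\Lambda_0$ one first observes that $X(x)\in E^c_x$ for every $x$: any nonzero $E^s$-component of $X(x)$ would be expanded unboundedly by $DX_{-t}$, by invariance of the splitting and uniform contraction of $E^s$ in forward time, contradicting compactness of $\Lambda$. Consequently $\langle X\rangle\subset E^c$ carries zero Lyapunov exponent, and since $\dim E^c=2$ and $\Lambda_0\subset\Lambda'$, the ASH condition forces the arbitrarily large hyperbolic times to exhibit all their area expansion in the one-dimensional complement of $\langle X\rangle$ inside $E^c$. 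Together with the lower bound on $\Vert X\Vert$, this promotes $\Lambda_0$ to a hyperbolic set, which combined with the isolated hyperbolic singularities makes $\Lambda$ hyperbolic, contradicting the hypothesis.

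Assume now $\Lambda$ transitive and fix $\sigma\in\text{Sing}(X)\cap\Lambda$. Partial hyperbolicity at $\sigma$ yields $T_\sigma M=E^s_\sigma\oplus E^c_\sigma$ with $\dim E^s_\sigma=1$ and $\dim E^c_\sigma=2$, and $E^s_\sigma$ is the eigenspace of a real eigenvalue $\lambda_{ss}<0$. To prove $\Lambda\cap W^{ss}(\sigma)=\{\sigma\}$, suppose $z$ is a regular point in this intersection; then $X_t(z)\to\sigma$ tangentially to $E^s_\sigma$, so the unit vectors $X(X_t(z))/\Vert X(X_t(z))\Vert$ accumulate on $E^s_\sigma$, while the same regular-point fact $X(w)\in E^c_w$ used above, combined with continuity of the splitting, forces the limit direction to lie in $E^c_\sigma$, contradicting $E^s_\sigma\cap E^c_\sigma=\{0\}$. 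For the eigenvalues of $DX(\sigma)|_{E^c_\sigma}$, domination gives $\lambda_{ss}<\text{Re}(\mu)$ for every such eigenvalue $\mu$, and hyperbolicity of $\sigma$ forbids zero real parts. It remains to exclude (a) both eigenvalues having negative real parts, which would make $\sigma$ a sink and hence incompatible with attachment to a nontrivial transitive set; and (b) a complex conjugate pair with positive real part, or two real positive eigenvalues, in which case $W^s(\sigma)=W^{ss}(\sigma)$ is one-dimensional and the standard stable-funnel argument applied to a dense orbit of $\Lambda$ produces a regular point of $\Lambda\cap W^{ss}(\sigma)$, contradicting what was just proved. Hence the two eigenvalues on $E^c_\sigma$ are real of opposite signs $\lambda_s<0<\lambda_u$, and the three cases correspond exactly to whether $-\lambda_s/\lambda_u$ is less than, greater than, or equal to~$1$.

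The hardest step I anticipate is (b): without the uniform volume expansion of the singular-hyperbolic case, excluding a central source at $\sigma$ relies on the funnel analysis, which requires carefully taking a Hausdorff limit of segments of a recurrent orbit inside a small linearizing neighborhood of $\sigma$ and extracting a point of $W^{s}_{\text{loc}}(\sigma)\setminus\{\sigma\}\subset W^{ss}(\sigma)\setminus\{\sigma\}$ as a limit of points of $\Lambda$.
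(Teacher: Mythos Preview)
Your proposal is correct and follows exactly the route the paper indicates: the paper gives no detailed proof of this theorem, merely stating that it is analogous to Theorem~A in \cite{Mo3}, and your three-step adaptation of that argument (existence of an attached singularity via the contrapositive, the intersection $\Lambda\cap W^{ss}(\sigma)=\{\sigma\}$ from $X\subset E^c$, and the eigenvalue trichotomy by ruling out sinks and index-one stable manifolds) is the intended one. The one place where your sketch is thinner than what the paper actually relies on is the promotion in step~1 from arbitrarily large hyperbolic times to uniform hyperbolicity of $\Lambda_0$: this is precisely the Hyperbolic Lemma for ASH sets, which the paper imports from \cite{SMV2} (Remark~2.3) rather than reproving, and which does require a genuine argument beyond the existence of hyperbolic times.
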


Now, the notion of expansiveness has been widely studied with the intention to understand the dynamic behavior of chaotic systems. In particular, for systems with continuous time, Bowen and Walters adapted the notion of expansiveness previously given for homeomorphisms by considering time-reparametrizations. Note that in a system with this kind of expansivity, the singularities are isolated from regular orbits. For systems with singularities accumulated by regular orbits, like the geometric Lorenz attractor for instance, Komuro in \cite{K} introduced the $\mathcal{K}^*$-expansiveness as follows: A flow $X_t$ on $M$ is $\mathcal{K}^*$\textit{-expansive} if for every $\varepsilon>0$ there is $\delta=\delta(\varepsilon)>0$ such that if $x,y\in M$ satisfy $d(X_t(x),X_{\theta(t)}(y))\leq\delta$ for every $t\in\mathbb{R}$ and some continuous, surjective and increasing function $\theta:\mathbb{R}\to\mathbb{R}$, there is $t_0\in\mathbb{R}$ such that $X_{\theta(t_0)}(y)\in X_{[t_0-\varepsilon,t_0+\varepsilon]}(x)$. Recently, X. Wen and L. Wen in \cite{WW} introduced the notion of rescaled-expansiveness inspired in the works of Liao (see \cite{L1} and \cite{L2}) as follows: 
\begin{definition}
A flow $X_t$ is \textit{rescaling expansive} ($R$-\textit{expansive} for short) on a compact invariant set $\Lambda$ if for every $\varepsilon>0$ there is $\delta=\delta(\varepsilon)>0$ such that, for any $x,y\in \Lambda$ and any increasing continuous function $\theta:\mathbb{R}\to\mathbb{R}$, if  $d(X_t(x),X_{\theta(t)}(y))\leq\delta\Vert X(X_t(x))\Vert$ for all $t\in\mathbb{R}$, then $X_{\theta(t)}(y)\in X_{[-\varepsilon,\varepsilon]}(X_t(x))$ for every $t\in\mathbb{R}$.
\end{definition}
In \cite{WY} the authors showed that the above definition is equivalent to the following: A flow $X_t$ is $R$-expansive on $\Lambda$ if for any $\varepsilon>0$, there is $\delta>0$ such that, for any $x,y\in \Lambda$ and an increasing homeomorphism $\theta:\mathbb{R}\to\mathbb{R}$, if  $d(X_t(x),X_{\theta(t)}(y))\leq\delta\Vert X(X_t(x))\Vert$ for all $t\in\mathbb{R}$, then $X_{\theta(0)}(y)\in X_{[-\varepsilon,\varepsilon]}(x)$.   

In \cite{AP} was showed that any singular-hyperbolic attractor associated to a $C^1$ vector field $X$ on a three-dimensional manifold is $\mathcal{K}^*$-expansive. In \cite{CSM} was proved that the Rovella atractor is an example of an attractor with Rovella singularities (so is not singular-hyperbolic) that satisfies this kind of expansiveness. So, by a result due to A. Artigue in \cite{A}, this attractor is $R$-expansive. Moreover, in \cite{SMV2} was proved that the Rovella attractor is a prototype example of systems which are ASH. With this in mind, in this paper we prove the following result: 

\begin{theorem}\label{TeoP}
Every asymptotically sectional-hyperbolic attractor $\Lambda$ associated to a $C^1$ vector field $X$ on $M$ is $R$-expansive.  
\end{theorem}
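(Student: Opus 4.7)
The plan is to argue by contradiction, using the linear Poincar\'e flow together with the arbitrarily large hyperbolic times supplied by the ASH condition (\ref{ht}). Assume $\Lambda$ is not $R$-expansive. Then there should exist $\varepsilon_0>0$, sequences $(x_n),(y_n)\subset\Lambda$, increasing continuous reparametrizations $\theta_n:\mathbb{R}\to\mathbb{R}$ with $\theta_n(0)=0$, and $\delta_n\searrow 0$ satisfying
\begin{equation*}
d(X_t(x_n), X_{\theta_n(t)}(y_n))\leq \delta_n \|X(X_t(x_n))\|,\quad \forall t\in \mathbb{R},
\end{equation*}
yet $y_n\notin X_{[-\varepsilon_0,\varepsilon_0]}(x_n)$. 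Passing to subsequences, I would write $x_n\to x_*$ and $y_n\to y_*$ in $\Lambda$, and aim to force $y_*\in X_{[-\varepsilon_0,\varepsilon_0]}(x_*)$, yielding the contradiction.

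The main case is $x_*\in\Lambda'=\Lambda\setminus W^s(\mathrm{Sing}(X))$. Here I would work with the linear Poincar\'e flow $\psi_t$ acting on the normal bundle $N$ along $\mathcal{O}(x_*)$, together with the splitting $N=N^s\oplus N^c$ induced by $E\oplus E^c$, where $N^c=E^c\cap N$ is one-dimensional. A direct computation in an orthonormal basis of $E^c$ containing $X(x_*)/\|X(x_*)\|$ should give, for unit $v\in N^c_{x_*}$,
\begin{equation*}
\|\psi_t(v)\|=\frac{\|X(x_*)\|}{\|X(X_t(x_*))\|}\,\vert\det(DX_t(x_*)\vert_{E^c_{x_*}})\vert.
\end{equation*}
Applying this at a sequence $t_k\to\infty$ of $C$-hyperbolic times for $(x_*,E^c_{x_*})$ supplied by (\ref{ht}), and using that $\|X\|$ is bounded on $\Lambda$, should yield $\|\psi_{t_k}(v)\|\geq c\,e^{Ct_k}$ for some $c>0$. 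On the other hand, a tubular-flow argument along $\mathcal{O}(x_*)$ will convert the rescaled hypothesis into an estimate of the form $\|\psi_t(w)\|\leq C'\|X(X_t(x_*))\|$, where $w\in N_{x_*}$ is the small normal component of the displacement of $y_*$ from $\mathcal{O}(x_*)$. Decomposing $w=w^s+w^c$, the exponential bound along the hyperbolic times should force $w^c=0$; a symmetric backward argument using uniform contraction of $N^s$ in forward time (equivalently expansion in backward time) will give $w^s=0$. Hence $w=0$, i.e. $y_*\in\mathcal{O}(x_*)$, and continuity for large $n$ will place $y_n$ inside $X_{[-\varepsilon_0,\varepsilon_0]}(x_n)$.

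The singular case $x_*\in W^s(\sigma)$ for some $\sigma\in\mathrm{Sing}(X)$ will be handled through Theorem \ref{T1}: $\sigma$ is Lorenz-like, Rovella-like, or resonant, and $\Lambda\cap W^{ss}(\sigma)=\{\sigma\}$. In linearizing coordinates near $\sigma$, the factor $\|X(X_t(x_n))\|$ tends to $0$ as the orbit approaches $\sigma$, so the right-hand side of the rescaled bound vanishes along the approach; this should rule out that $X_{\theta_n(t)}(y_n)$ and $X_t(x_n)$ sit on different components of the local complement of $W^{ss}(\sigma)$. The two orbits will therefore exit any small neighborhood of $\sigma$ essentially aligned, and between consecutive visits to a neighborhood of $\mathrm{Sing}(X)$ the regular-case argument will apply, with the hyperbolic-time estimates concatenated across the excursions.

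The main obstacle will be the non-uniform hyperbolicity in the Rovella-like and resonant cases, where the center subbundle is only asymptotically expanding: one cannot invoke a uniform constant of volume expansion on $E^c$, so the distortion of sectional areas during passages near $\sigma$ must be controlled via the Hyperbolic Lemma (noted as a remark in Section 2). Stringing together the hyperbolic times across infinitely many close approaches to $\mathrm{Sing}(X)$, while maintaining the estimate on $\|\psi_{t_k}(w)\|$ through each such passage, is precisely where the argument will depart most substantially from the singular-hyperbolic case treated in \cite{WW}, and where the bulk of the technical work will lie.
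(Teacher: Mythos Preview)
Your limit strategy collapses to a triviality. With $\theta_n(0)=0$, the rescaled hypothesis at $t=0$ gives $d(x_n,y_n)\leq\delta_n\Vert X(x_n)\Vert\to0$, so $y_*=x_*$ and the normal displacement $w\in N_{x_*}$ you intend to analyze is identically zero. The conclusion $y_*\in X_{[-\varepsilon_0,\varepsilon_0]}(x_*)$ is then vacuous, and it does \emph{not} imply $y_n\in X_{[-\varepsilon_0,\varepsilon_0]}(x_n)$ for large $n$: nothing prevents $y_n$ from lying off $\mathcal{O}(x_n)$ (or on it at a faraway time, as on a long periodic orbit) while still converging to $x_*$. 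The contradiction has to be extracted at a fixed large $n$, working with the actual pair $(x_n,y_n)$ and $\delta_n$ small but positive.

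Even at fixed $n$, the linear Poincar\'e flow step has a gap. Your ``tubular-flow argument'' would have to convert the \emph{nonlinear} rescaled shadowing $d(X_t(x_n),X_{\theta_n(t)}(y_n))\leq\delta_n\Vert X(X_t(x_n))\Vert$ into the linear bound $\Vert\psi_t(w)\Vert\leq C'\Vert X(X_t(x_n))\Vert$ over an \emph{infinite} time interval, including arbitrarily many passages through neighborhoods of the singularities. That requires distortion control you have not supplied; in the ASH setting (unlike the sectional-hyperbolic case in \cite{WW}) there is no uniform sectional expansion to absorb the errors, and the hyperbolic times of $x_n$ need not be hyperbolic times for $y_n$ or for intermediate points. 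The paper handles exactly this by abandoning the linearized picture and following \cite{AP}: it builds adapted cross-sections together with the singular partition of \cite{SYY}, proves several distortion lemmas comparing $\vert\det DX_{\tau}(\cdot)\vert_{E^c}\vert$ along nearby orbits through each regime (near $\sigma$, in a neighborhood $W_0$ of the far-from-singularity invariant set, and in the transition region), and concatenates these to show that hyperbolic times of $x$ transfer to $y$ with a definite exponent $c_*>0$. It then constructs a rescaled tube-like domain and a Poincar\'e return map on a fixed cross-section, and obtains the contradiction by a curve-stretching estimate: the image of a short $cu$-curve joining $x_n$ to $y_n$ must exceed the diameter bound forced by the adapted section. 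The final reduction from ``$y$ lies in the local strong-stable leaf of the orbit of $x$'' to ``$y$ lies in a short orbit segment of $x$'' uses backward expansion of $W^{ss}$ together with the uniform angle between $X$ and $E^{ss}$, a step your outline also omits.
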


Finally, sensitive dependence on initial conditions is a very weak form of chaos that  describes, in simple terms, the way that orbits of nearby points are deviate. More precisely, we say that the flow of a vector field $X$ is \textit{sensitive to initial conditions} if there is $\delta>0$ such
that, for any $x\in M$ and any neighborhood $N$ of $x$, there is a point $y\in N$ and $t\geq 0$ such that 
\begin{displaymath}
d(X_t(x),X_t(y))>\delta. 
\end{displaymath}
The next theorem shows that ASH attractors also satisfy this property: 

\begin{theorem}\label{TeoSens}
Every asymptotically sectional-hyperbolic attractor associated to a $C^1$ vector field $X$ on $M$ is sensitive to initial conditions.
\end{theorem}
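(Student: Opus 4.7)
My plan is to treat separately the points in $\Lambda$ that lie on the stable manifold of a singularity and those that do not, exploiting the attached singularity provided by Theorem \ref{T1}. Fix an attached singularity $\sigma\in\Lambda$; by Theorem \ref{T1}, $\sigma$ is Lorenz-like, Rovella-like, or resonant, hence its unstable manifold $W^u(\sigma)$ is one-dimensional, and since $\Lambda$ is an attractor, $W^u(\sigma)\subset\Lambda$. Choose a point $p$ on $W^u(\sigma)$ at geodesic distance $\rho>0$ from $\sigma$ along this arc and set $\delta=\rho/4$ as the candidate sensitivity constant.

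For $x\in\Lambda'=\Lambda\setminus W^s(\mathrm{Sing}(X))$, I would invoke the ASH condition directly. With $L_x=E^c_x$ (two-dimensional because $\dim M=3$), property \eqref{ht} supplies $C$-hyperbolic times $t_k\to\infty$ with $|\det DX_{t_k}(x)\vert_{E^c_x}|\geq e^{Ct_k}$, so the largest singular value of $DX_{t_k}(x)\vert_{E^c_x}$ is at least $e^{Ct_k/2}$. Consequently the image of a disk of small radius $r$ in a center-unstable plaque through $x$ is an ellipse whose major axis has length at least $re^{Ct_k/2}$. A plaque family argument for the partially hyperbolic set $\Lambda$ then produces, inside any neighborhood $N$ of $x$, a point $y$ whose forward orbit satisfies $d(X_{t_k}(x),X_{t_k}(y))\geq\delta$ for all sufficiently large $k$.

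For $x\in W^s(\mathrm{Sing}(X))\cap\Lambda$, the forward orbit of $x$ converges to some singularity $\sigma_x\in\Lambda$. Since $\Lambda$ is transitive and compact, the set of points in $\Lambda$ whose forward orbit is dense in $\Lambda$ is residual, so $N\cap\Lambda$ contains such a $y$; in particular $y\notin W^s(\mathrm{Sing}(X))$. Because $\mathrm{diam}(\Lambda)\geq\rho=4\delta$ (using $W^u(\sigma)\subset\Lambda$), the dense orbit of $y$ visits points at distance greater than $2\delta$ from $\sigma_x$ at arbitrarily large times, while $X_t(x)\to\sigma_x$; combining these facts gives $d(X_t(x),X_t(y))>\delta$ for some $t\geq 0$.

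The principal obstacle is the first case: converting asymptotic area expansion on $E^c$ into genuine metric separation of two specific orbits. The ASH hypothesis only provides a $\limsup$ estimate, and the direction of maximal singular value of $DX_{t_k}(x)\vert_{E^c_x}$ rotates with $k$, so one cannot simply perturb along a fixed expanding vector. The resolution is to work with a two-dimensional plaque and exploit the diameter estimate $e^{Ct_k/2}$, together with distortion control along center-unstable plaques inherited from the dominated splitting and from the compactness of $\Lambda$; once this step is secured, the harmonization of the two separation scales with $\delta=\rho/4$ completes the argument.
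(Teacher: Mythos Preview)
Your case split is natural and the argument for $x\in W^s(\mathrm{Sing}(X))$ is sound, but the first case contains a genuine gap that is precisely the heart of the theorem. You correctly identify the obstacle---turning asymptotic area expansion on $E^c_x$ into metric separation of nearby orbits---yet the resolution you sketch (``distortion control along center-unstable plaques inherited from the dominated splitting and from the compactness of $\Lambda$'') is not available under the ASH hypothesis. Two concrete failures: first, the $C$-hyperbolic times $t_k$ belong to $x$ alone; unlike in the sectional-hyperbolic case there is no a priori lower bound on $|\det DX_{t_k}(y)\vert_{E^c_y}|$ for $y$ near $x$, because between hyperbolic times the area may contract and the orbits of $x$ and $y$ may decorrelate arbitrarily. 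Second, even if the image $X_{t_k}(D)$ of a small $cu$-disk had large two-dimensional area, this does \emph{not} force large ambient diameter: an immersed surface contained in a ball of radius $\delta$ can have unbounded area. So the step ``the image of a disk of small radius $r$ \ldots\ is an ellipse whose major axis has length at least $re^{Ct_k/2}$'' is a purely linear statement at $x$ that does not survive the passage to the nonlinear flow over the long, non-uniform times $t_k$.

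The paper's proof does not attempt a direct plaque-expansion argument. It proceeds by contradiction and routes the problem back through the cross-section machinery developed for Theorem~\ref{TeoP}: Lemmas~3.1--3.6, which compare $|\det DX_{t_y}(y)\vert_{E^c_y}|$ with $|\det DX_{t_x}(x)\vert_{E^c_x}|$ under a tracking hypothesis and occupy most of Section~3, together with the Poincar\'e return map on an adapted section, yielding a length contradiction along a one-dimensional curve. The extra ingredient specific to sensitivity is a H\"older estimate for the Hartman--Grobman conjugacy near each singularity, used to show that if $d(X_t(x),X_t(y))\leq\delta$ for all $t\geq0$ then the two orbits enter and exit the singular neighborhoods in the same partition element $D^i_n$ up to a bounded shift, which recovers relation~\eqref{L} and hence the hypotheses of Lemma~3.6. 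Your outline omits exactly this apparatus, and that is where all the work lies.
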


\section{Proofs}

In a similar way to the showed in \cite{AP}, the proof of Theorem 2.4 is by contradiction, i.e, suppose that there exists $\varepsilon>0$, a sequence $\delta_n\to0$, a sequence of increasing homeomorphisms $\theta_n:\mathbb{R}\to\mathbb{R}$, and a sequence of points $x_n, y_n\in\Lambda$ such that
\begin{equation}\label{assumption1}
d(X_t(x_n),X_{\theta_n(t)}(y_n))\leq\delta_n\Vert X(X_t(x_n))\Vert, \quad\forall t\in\mathbb{R},
\end{equation}
but 
\begin{equation}\label{assumption2}
X_{\theta_n(0)}(y_n)\notin X_{[-\varepsilon,\varepsilon]}(x_n).
\end{equation}

It should be noted that most of parts of the proof of Theorem 2.4 resemble that of the exhibited in the proof of Theorem A in \cite{AP}. In that reference, the crucial step in the argument was to explode the exponential growth of area in the central subbundle of every point in the attractor. Nevertheless, this is not the case for ASH sets, so that some previous results are needed. 

First, we recall the construction of adapted cross sections given in \cite{AP}: Let $E^s\oplus E^c$ be the partially hyperbolic splitting associated to the asymptotically sectional-hyperbolic attractor $\Lambda$, and let consider a continuous extension $\widetilde{E}^s\oplus \widetilde{E}^c$ to the basin of attraction $U$. In what follows, with the purpose to simplifying the notation, we write $E^{s,c}$ for $\widetilde{E}^{s,c}$. It is well known that $E^s$ can be chosen invariant by $DX_t$ for positive $t$. Nevertheless, the subbundle $E^c$ is not invariant in general, but it is possible to consider an invariant cone field $C_a^c$ of size $a>0$ around $E^c$ on $U$ defined by 
\begin{displaymath}
C_a^c(x):=\lbrace v=v_s+v_c : v_s\in E^s, v_c\in E^c\text{ and } \Vert v_s\Vert\leq a\Vert v_c\Vert\rbrace,\quad \forall x\in U.
\end{displaymath}
\begin{remark}
By shrinking $U$ if it is necessary, the number $a>0$ can be taken arbitrarily small.
\end{remark}

Now, by Proposition 2.1 in \cite{AP} we can obtain the local stable manifold $W_{\varepsilon}^s(x)$, $0<\varepsilon<1$, for every $x\in U$. Let $\Sigma'$ be a cross-section to $X$ containing $x$ in its interior. Define $W^s(x,\Sigma')$ to be the connected component of $W_{\varepsilon}^s(x)\cap\Sigma'$. This gives us a foliation $\mathcal{F}_{\Sigma'}$ of $\Sigma'$.  By Remark 2.2 in that article, we can construct a smaller cross section $\Sigma$, which is the image of a diffeomorphism $h:[-1,1]\times[-1,1]\to \Sigma'$, that sends vertical lines inside $\mathcal{F}_{\Sigma'}$ in a such way that $x$ belongs to the interior of $h([-1,1]\times[-1,1])$. In this case, the \textit{stable boundary} $\partial^s\Sigma$ and \textit{cu-boundary} $\partial^{cu}\Sigma$ of $\Sigma$ are defined by 
\begin{displaymath}
\partial^s\Sigma=h(\lbrace -1,1\rbrace\times[-1,1])\quad\text{ and }\quad\partial^{cu}\Sigma=h([-1,1]\times\lbrace-1,1\rbrace)
\end{displaymath}
respectively. We say that $\Sigma$ is $\eta$-\textit{adapted} if 
\begin{displaymath}
d(\Lambda\cap\Sigma,\partial^{cu}\Sigma)>\eta. 
\end{displaymath}

A consequence of the hyperbolic lemma (see \cite{SMV2}) is the following: 
\begin{prop}
Let $\Lambda$ be an asymptotically sectional-hyperbolic set attractor and let $x\in\Lambda$ be a regular point. Then,
there exists a $\eta_0$-adapted cross-section $\Sigma$ at $x$ for some $\eta_0>0$. 
\end{prop}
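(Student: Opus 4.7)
The plan is to construct the adapted cross-section by a careful placement of the cu-boundary of a sub-rectangle of the cross-section $\Sigma'$ furnished by Proposition~2.1 of \cite{AP}. Starting from the stable foliation $\mathcal{F}_{\Sigma'}$ on $\Sigma'$, I parametrize candidate sub-rectangles by diffeomorphisms $h:[-1,1]\times[-1,1]\to\Sigma'$ with $h(0,0)=x$ that send vertical lines into leaves of $\mathcal{F}_{\Sigma'}$. The cu-boundary $\partial^{cu}\Sigma=h([-1,1]\times\{-1,1\})$ then consists of two cu-arcs transverse to the stable foliation, and the claim reduces to the geometric statement that these arcs can be placed, one slightly above and one slightly below $x$ in the cu-direction, so as to be disjoint from the closed set $\Lambda\cap\Sigma'$. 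Once this is done, compactness of $\Lambda\cap\Sigma$ upgrades disjointness to the required inequality $d(\Lambda\cap\Sigma,\partial^{cu}\Sigma)>\eta_0$ for some $\eta_0>0$.

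The existence of such cu-arcs is where the Hyperbolic Lemma for ASH sets (cf.\ \cite{SMV2}) enters decisively. The Hyperbolic Lemma, together with the structural information provided by Theorem~\ref{T1} (in particular the relation $\Lambda\cap W^{ss}(\sigma)=\{\sigma\}$ for each attached singularity), implies that the set of cu-heights realized by $\Lambda\cap\Sigma'$ in the parametrization $h$ is closed and nowhere dense in a neighborhood of $0$ in the cu-coordinate. Indeed, the Hyperbolic Lemma supplies uniform-size local stable manifolds for all points of $\Lambda\setminus W^s(\mathrm{Sing}(X))$ and non-uniform cu-expansion at arbitrarily large hyperbolic times; an accumulation of cu-heights would then flow forward to produce positive cu-area inside $\Lambda$ along hyperbolic times, contradicting the compactness of the trapping region~$U$.

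Granted this thinness, a direct selection produces cu-offsets $s^{-}<0<s^{+}$ arbitrarily close to $0$ whose corresponding cu-arcs $h([-1,1]\times\{s^{\pm}\})$ miss $\Lambda\cap\Sigma'$; rescaling the cu-coordinate then yields the desired $h$, and hence the desired $\Sigma$. The main obstacle is the thinness claim itself: in the sectional-hyperbolic setting uniform cu-expansion makes it essentially immediate, but under the ASH hypothesis only expansion at hyperbolic times is available, so the contradiction argument must be run along sequences of hyperbolic times rather than for all $t>0$, using the Hyperbolic Lemma of \cite{SMV2} as the non-uniform substitute for uniform hyperbolicity. Handling points of $\Lambda\cap\Sigma'$ lying in $W^s(\mathrm{Sing}(X))$ requires a separate appeal to Theorem~\ref{T1}, which forces the intersection of $\Lambda$ with the strong stable manifolds of the singularities to be trivial and so confines the stable-manifold directions inside $\Sigma'$ to the fibers of $\mathcal{F}_{\Sigma'}$ themselves, leaving the cu-direction unobstructed.
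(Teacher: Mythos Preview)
Your overall strategy---find a sub-rectangle whose cu-boundary avoids $\Lambda$---is the right one, but the argument has a genuine gap at the ``thinness'' step, and there is some directional confusion that obscures what actually needs to be proved. The cu-arcs $h([-1,1]\times\{s\})$ are stacked in the \emph{stable} direction (the second coordinate runs along the leaves of $\mathcal{F}_{\Sigma'}$), so the parameter $s$ you are varying is a stable height, not a ``cu-coordinate''. What must be shown is simply that $W^{ss}_\epsilon(x)\not\subset\Lambda$ for small $\epsilon$: once you know there are points $b^-<0<b^+$ on the stable leaf of $x$ that lie outside $\Lambda$, closedness of $\Lambda$ and a shrink in the first coordinate immediately give the adapted rectangle. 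Your stronger ``nowhere dense projection'' claim is not needed, and in any case your justification for it fails: an accumulation of stable heights does not by itself produce any two-dimensional piece inside $\Lambda$, and even if it did, the growth of the area of an immersed cu-disk under forward iteration is \emph{not} contradicted by compactness of the trapping region---immersed surfaces in compact sets can have arbitrarily large area. So the sentence ``positive cu-area \dots\ contradicting the compactness of $U$'' is not a proof.

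The paper, by contrast, does not argue via area at all: it invokes the Hyperbolic Lemma of \cite{SMV2} and the construction in \cite{AP}, where the key point is exactly $W^{ss}_\epsilon(x)\not\subset\Lambda$. The way the Hyperbolic Lemma enters is structural rather than quantitative: if some strong-stable arc through $x$ were contained in $\Lambda$, then by backward invariance and backward expansion of $E^s$ one obtains arbitrarily long strong-stable arcs in $\Lambda$ through the points of $\alpha(x)$; if $\alpha(x)$ meets a singularity $\sigma$ this forces $W^{ss}(\sigma)\subset\Lambda$, contradicting $\Lambda\cap W^{ss}(\sigma)=\{\sigma\}$ from Theorem~\ref{T1}, while if $\alpha(x)$ avoids the singularities the Hyperbolic Lemma makes $\alpha(x)$ uniformly hyperbolic and, combined with the attractor property (unstable manifolds of hyperbolic periodic orbits lie in $\Lambda$), yields an open subset of $M$ inside $\Lambda$, again contradicting Theorem~\ref{T1}. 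Your sketch names the Hyperbolic Lemma but never uses it in this way---what you actually appeal to (stable manifolds of uniform size, expansion at hyperbolic times) comes from partial hyperbolicity and the ASH definition, not from the lemma---so the crucial mechanism is missing.
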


\begin{remark}
The $\eta_0$-adapted cross-section in the above proposition can be built from any cross section that contains $x$ in its interior. 
\end{remark}

Now, we need to consider a special kind of neighborhoods of the singularities contained in an ASH attractor. For this, we recall the construction given in \cite{SYY}: Let $\beta_1>0$ such that
\begin{enumerate}[(a)]
    \item $B_{\beta_1}(\sigma)\cap B_{\beta_1}(\sigma')=\emptyset$, where $B_r(a)$ denotes the open ball centered in $a$ and radius $r>0$ and $\sigma,\sigma'\in Sing_{\Lambda}(X)=Sing(X)\cap\Lambda$.
    \item The map $exp_{\sigma}$ is well defined on $\lbrace v\in TM_{\sigma} : \Vert v\Vert\leq\beta_1\rbrace$ for every $\sigma\in Sing_{\Lambda}(X)$. 
    \item There are $L_0, L_1>0$ such that 
    \begin{displaymath}
    L_0\leq\frac{\Vert X(x)\Vert}{d(x,\sigma)}\leq L_1,\quad\forall x\in \overline{B_{\beta_1}(\sigma)},\quad\forall \sigma\in Sing_{\Lambda}(X). 
    \end{displaymath}
    \item The flow in $B_{\beta_1}(\sigma)$ is a small $C^1$ perturbation of the linear flow. 
\end{enumerate}
For every $\sigma\in Sing_{\Lambda}(X)$ define
\begin{displaymath}
D_{\sigma}=exp_{\sigma}(\lbrace v=(v^s,v^u)\in TM_{\sigma} : \Vert v\Vert\leq\beta_1, \Vert v^s\Vert=\Vert v^u\Vert \rbrace)\subset M,
\end{displaymath}
and 
\begin{displaymath}
D_n=D_{\sigma}\cap(B_{e^{-n}}(\sigma)\setminus B_{e^{-(n+1)}}(\sigma)),\quad\forall n\geq n_0,
\end{displaymath}
where $n_0$ is large enough. 
In \cite{SYY} the authors constructed a partition of the  cross sections $\Sigma_{\sigma}^{i,o,\pm}$ given in \cite{AP}. More precisely, assume that $\Sigma_{\sigma}^{i,o,\pm}\subset\partial B_{\beta_1}(\sigma)$. Let consider 
\begin{displaymath}
D_n^o=\bigcup_{x\in D_n}X_{t_x^+}(x), \quad D_n^i=\bigcup_{x\in D_n}X_{-t_x^-}(x),\quad\forall n\geq n_0,
\end{displaymath}
where 
\begin{displaymath}
t_x^+=\inf\lbrace\tau>0 : X_{\tau}(x)\in \Sigma_{\sigma}^{o,\pm} \rbrace,
\end{displaymath}
and
\begin{displaymath}
t_x^-=\inf\lbrace\tau>0 : X_{-\tau}(x)\in \Sigma_{\sigma}^{i,\pm} \rbrace.
\end{displaymath}
Note that $\lbrace D_n^i\cap\Sigma_{\sigma}^{i,\pm}\rbrace_{n\geq n_0}$ form a partition of $\Sigma_{\sigma}^{i,\pm}$ for which $X_{\tau(x)}(x)\in D_n^o\cap\Sigma_{\sigma}^{o,\pm}$ for every  $x\in D_n^i\cap\Sigma_{\sigma}^{i,\pm}$ and every $n\geq n_0$, where $\tau(\cdot)$ is the flight time to go from $\Sigma_{\sigma}^{i,\pm}$ to $\Sigma_{\sigma}^{o,\pm}$. Moreover, they showed that this flight time satisfies 
\begin{equation}\label{vuelo}
\tau(x)\approx \frac{\lambda_u+1}{\lambda_u}n,\quad \forall x\in D_n^i\cap\Sigma_{\sigma}^{i,\pm},\quad\forall n\geq n_0.
\end{equation}
In this case, let $\widetilde{\Sigma_{\sigma}^{i,\pm}}=\left(\left(\bigcup_{n\geq n_0}D_n^i\cap\Sigma_{\sigma}^{i,\pm}\right)\right)\cup \ell_{\pm}$, where $\ell_{\pm}$ is contained in $ W^s_{loc}(\sigma)\cap\Sigma_{\sigma}^{i,\pm}$, and let consider
\begin{equation}\label{vecindad}
V_{\sigma}=\bigcup_{z\in\widetilde{\Sigma_{\sigma}^{i,\pm}}\setminus\ell_{\pm}}X_{(0,\tau(z))}(z)\cup (-e^{-n_0},e^{-n_0})\times(-e^{-n_0},e^{-n_0})\times(-1,1)\subset U. 
\end{equation}

Denote $\widetilde{\Sigma^{i,o,\pm}}=\bigcup_{\sigma\in Sing_{\Lambda}(X)}\widetilde{\Sigma_{\sigma}^{i,o,\pm}}$ and $V=\bigcup_{\sigma\in Sing_{\Lambda}(X)}V_{\sigma}$.

\begin{remark}
We have the following remarks: 
\begin{itemize}
    \item By Remark 3.2 we can assume without loss of generality that every cross section in $\widetilde{\Sigma^{i,o,\pm}}$ is $\eta_0$ adapted for some $\eta_0>0$. 
    \item The above construction can be made by taking $\widetilde{\beta}<\beta_1$. In this case, we denote
    \begin{displaymath}
    \widetilde{\Sigma_{\widetilde{\beta}}^{i,o,\pm}}=\bigcup_{\sigma\in Sing_{\Lambda}(X)}\widetilde{\Sigma_{\sigma,{\widetilde{\beta}}}^{i,o,\pm}}\quad\text{and}\quad V_{\widetilde{\beta}}=\bigcup_{\sigma\in Sing_{\Lambda}(X)}V_{\sigma,{\widetilde{\beta}}}
    \end{displaymath}
\end{itemize}
\end{remark}

\begin{lemma}
Let $\widetilde{\varepsilon}_0>0$. There are a positive number $\delta_0$, independent of $\widetilde{\varepsilon}_0$, and a positive number $\widetilde{\beta}$ such that if $x,y\in\widetilde{\Sigma_{\widetilde{\beta}}^{i,\pm}}$ satisfy $d(X_t(x),X_{\theta(t)}(y))\leq\delta_0\Vert X(X_t(x))\Vert $ for every $t\in\mathbb{R}$, where $\theta:\mathbb{R}\to\mathbb{R}$ is a continuous and increasing function, then 
\begin{equation}\label{estimneighborhood}
    \frac{\vert \det DX_{\tau(y)}(y)\vert_{E^c_y}\vert}{\vert \det DX_{\tau(x)}(x)\vert_{E^c_x}\vert}\geq K_1C_0^{\tau(x)}, 
\end{equation}
where $C_0=C_0(\widetilde{\varepsilon}_0)$  and $K_1$ is a fixed positive constant. 
\end{lemma}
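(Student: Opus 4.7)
The plan is to work in linearizing coordinates around each singularity in $\text{Sing}_\Lambda(X)$ and compare the Jacobians of $x$ and $y$ on $E^c$ along their passage through the neighborhood $V_{\widetilde{\beta}}$. First, I would use item (d) in the construction of $V_\sigma$ to choose $\widetilde{\beta} < \beta_1$ small enough that on $B_{\widetilde{\beta}}(\sigma)$ the flow of $X$ is $C^1$-close to its linearization at $\sigma$, with closeness controlled by $\widetilde{\varepsilon}_0$. Combined with item (c), the rescaling hypothesis $d(X_t(x), X_{\theta(t)}(y)) \leq \delta_0 \|X(X_t(x))\|$ translates into
\begin{displaymath}
d(X_t(x), X_{\theta(t)}(y)) \leq (\delta_0 L_1)\, d(X_t(x),\sigma),
\end{displaymath}
so $y$'s orbit stays in a proportional tube around $x$'s orbit relative to the distance to $\sigma$; this is the scale-invariant version of closeness that will let me work uniformly in how deep $x$ enters the neighborhood.

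Second, if $x \in D_n^i \cap \Sigma_\sigma^{i,\pm}$ then $d(x,\sigma) \asymp e^{-n}$, and the previous inequality forces the corresponding entry point of $y$ to lie within $O(\delta_0 e^{-n})$ of $x$. For $\delta_0$ taken small enough, uniformly in $n$ and independently of $\widetilde{\varepsilon}_0$, this places $y$ in some $D_m^i \cap \Sigma_\sigma^{i,\pm}$ (for the \emph{same} singularity $\sigma$) with $|n-m| \leq N_0(\delta_0)$. Applying the flight-time asymptotics \eqref{vuelo} then gives $|\tau(x)-\tau(y)| \leq T_0(\delta_0)$ for a constant depending only on $\delta_0$.

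Third, I would estimate $|\det DX_t(z)|_{E^c_z}|$ along orbit segments remaining in $V_{\widetilde{\beta}}$. The narrow invariant cone $C_a^c$ (with $a$ shrunk as in the preceding remark) keeps $E^c$ close to the linear center plane spanned by the $\lambda_s$ and $\lambda_u$ eigendirections, on which the linear flow has determinant $e^{(\lambda_s+\lambda_u)t}$. Combining this with the $\widetilde{\varepsilon}_0$-closeness of $X$ to its linearization, I expect two-sided bounds of the form
\begin{displaymath}
e^{((\lambda_s+\lambda_u)-C\widetilde{\varepsilon}_0)t} \;\leq\; |\det DX_t(z)|_{E^c_z}|\;\leq\; e^{((\lambda_s+\lambda_u)+C\widetilde{\varepsilon}_0)t},
\end{displaymath}
for a uniform constant $C$. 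Taking the ratio at $t=\tau(y)$ and $t=\tau(x)$, the bounded gap $|\tau(x)-\tau(y)|\leq T_0$ absorbs the linear factor $e^{(\lambda_s+\lambda_u)(\tau(y)-\tau(x))}$ into a constant $K_1$, while the perturbation errors combine into a factor of the form $e^{-2C\widetilde{\varepsilon}_0 \tau(x)}$; setting $C_0 := e^{-2C\widetilde{\varepsilon}_0}$ yields \eqref{estimneighborhood}.

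The main obstacle I anticipate is controlling the angular drift between the continuous cone direction $E^c$ and the true linear center plane at points whose flight time $\tau$ is very large (i.e.\ orbits that shadow $W^s(\sigma)$ for a long time), while simultaneously ensuring that the $C^1$-perturbation error in the linearization does not accumulate faster than $e^{C\widetilde{\varepsilon}_0 t}$ over that time. The invariance of the narrow cone $C_a^c$ together with the uniform $C^1$-closeness on $B_{\widetilde{\beta}}(\sigma)$ should make this feasible, but the bookkeeping at the exit cross section $\Sigma_\sigma^{o,\pm}$ where the linearization ceases to apply, and the fact that by Theorem \ref{T1} different $\sigma$'s may be Lorenz-like, Rovella-like, or resonant (so $\lambda_s+\lambda_u$ may have either sign), force a case-by-case treatment of the sign of the linear exponent in the estimates above.
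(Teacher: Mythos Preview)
Your overall strategy is sound and would prove the lemma, but it takes a genuinely different route from the paper's argument.

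\textbf{What the paper does.} The paper never compares either orbit to the linearized flow. Instead it fixes, by uniform continuity of $DX_1$ and of the bundle $E^c$ on $\overline{U}$, a $\widetilde{\delta}_0>0$ so that $d(p,q)<\widetilde{\delta}_0$ implies $\vert\det DX_1(q)\vert_{E^c_q}\vert/\vert\det DX_1(p)\vert_{E^c_p}\vert\geq C_0(\widetilde{\varepsilon}_0)\approx 1$. Then $\widetilde{\beta}$ is chosen so small that $\Vert X\Vert<L_0\widetilde{\delta}_0/2$ on $B_{\widetilde{\beta}}(\sigma)$. The crucial shortcut is that for every integer time $n$ with $X_n(x),X_n(y)\in\overline{B_{\widetilde{\beta}}(\sigma)}$ one has, simply by the triangle inequality through $\sigma$ and item~(c),
\[
d(X_n(x),X_n(y))\leq d(X_n(x),\sigma)+d(X_n(y),\sigma)\leq \tfrac{1}{L_0}\bigl(\Vert X(X_n(x))\Vert+\Vert X(X_n(y))\Vert\bigr)<\widetilde{\delta}_0.
\]
So the time-$1$ ratio bound applies at every integer step, and the chain rule gives $C_0^{\,n_0}$ with $n_0\approx\tau(x)$. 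The rescaled closeness hypothesis is used only to show, as you also do, that $y$ lands in $D^i_{n-1}\cup D^i_{n}\cup D^i_{n+1}$ when $x\in D^i_n$, whence $\vert\tau(x)-\tau(y)\vert$ is uniformly bounded; the boundary contributions over that bounded gap give the constant $K_1$.

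\textbf{Comparison.} Your approach bounds each Jacobian separately against $e^{(\lambda_s+\lambda_u)t}$ and lets the linear part cancel in the ratio. This is correct and yields a more explicit $C_0=e^{-2C\widetilde{\varepsilon}_0}$, but it forces you to track the angle between $DX_t(E^c_z)$ and the eigenplane of $\lambda_s,\lambda_u$ over arbitrarily long passages---exactly the obstacle you flagged. The paper's direct comparison of $X_n(x)$ with $X_n(y)$ at the \emph{same} integer times sidesteps this entirely: no cone tracking near the exit, and no case split on the sign of $\lambda_s+\lambda_u$ (which, incidentally, would cancel in your ratio anyway, so that case analysis is not actually needed). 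The trade-off is that the paper's $C_0$ is defined only implicitly through uniform continuity, whereas yours is quantitatively tied to the $C^1$-distance to the linearization.
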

\begin{proof}
By uniform continuity of $DX_1(\cdot)$ and the subbundle $E^c$ on $\overline{U}$, we see that there is $\widetilde{\delta}_0>0$ such that 
\begin{equation}
    \text{if }\quad d(x,y)<\widetilde{\delta}_0,\quad\text{then}\quad \frac{\vert \det DX_1(y)\vert_{E^c_y}\vert}{\vert \det DX_1(x)\vert_{E^c_x}\vert}\geq C_0\approx 1,
\end{equation}
where $C_0=C(\widetilde{\varepsilon}_0)$. Besides, for such value of $\widetilde{\delta}_0$, there is $\widetilde{\beta}<\beta_1$ such that
\begin{equation}
    \Vert X(x)\Vert<\frac{L_0\widetilde{\delta}_0}{2},\quad\forall x\in \overline{B_{\widetilde{\beta}}(\sigma)},\quad\forall\sigma\in Sing_{\Lambda}(X).
\end{equation}

Let $0<\delta_0<\frac{e-1}{L_1}$. First, by part (c) and the choice of $\delta_0$ we have for every $s\in\mathbb{R}$ that 
\begin{eqnarray*}
d(X_s(x),X_{\theta(s)}(y))&\leq&\delta_0\Vert X(X_s(x))\Vert \nonumber\\
&\leq& \delta_0 L_1d(X_s(x),\sigma) \nonumber\\
&<&(e-1)d(X_s(x),\sigma).
\end{eqnarray*}
Second, by construction on $V_{\sigma}$ we see that there is $s>0$ such that $X_s(x)\in D_n(\sigma)$ for some $n\geq n_0$, i.e, $x$ belongs to $D_n^i\cap\Sigma_{\sigma,\widetilde{\beta}}^{i,\pm}$. Since $d(X_s(x),\sigma)<e^{-n}$, it follows that 
\begin{eqnarray}\label{cota1}
d(X_{\theta(s)}(y),\sigma)&\leq &d(X_s(x),X_{\theta(s)}(y))+d(X_s(x),\sigma) \nonumber \\
&\leq & ed(X_s(x),\sigma) \nonumber\\
&<&e^{-(n-1)}. 
\end{eqnarray}

On the other hand, we have 
\begin{eqnarray*}
e^{-(n+1)}\leq d(X_s(x),\sigma)&\leq&d(X_{\theta(s)}(y),\sigma)+d(X_s(x),X_{\theta(s)}(y)) \\
&\leq & d(X_{\theta(s)}(y),\sigma)+\delta_0 L_1d(X_s(x),\sigma).
\end{eqnarray*}
So, by the choice of $\delta_0$, 
\begin{equation}\label{cota2}
d(X_{\theta(s)}(y),\sigma)\geq (1-\delta_0 L_1)e^{-(n+1)}> e^{-(n+2)}.    
\end{equation}
Thus, if $\varepsilon_s>0$ satisfies $X_{v_s}(y)=X_{\theta(s)+\varepsilon_s}(y)\in D_{\sigma}$, we have by \eqref{cota1}  and \eqref{cota2} that 
\begin{displaymath}
X_{v_s}(y)\in D_{n-1}(\sigma)\cup D_{n}(\sigma)\cup D_{n+1}(\sigma). 
\end{displaymath}
This shows that 
\begin{displaymath}
y\in \left(D_{n-1}^i(\sigma)\cap\Sigma_{\sigma}^{i,\pm}\right)\cup \left(D_n^i(\sigma)\cap\Sigma_{\sigma}^{i,\pm}\right)\cup \left(D_{n+1}^i(\sigma)\cap\Sigma_{\sigma}^{i,\pm}\right). 
\end{displaymath}
So, 
\begin{equation}\label{L}
\vert\tau(x)-\tau(y)\vert\approx \frac{\lambda_u+1}{\lambda_u}=L.
\end{equation}

Now, note that if $x,y\in\Sigma_{\sigma,\widetilde{\beta}}^{i,\pm}$ and $n_0\in\mathbb{N}$ is the largest number such that $X_{n_0}(x),X_{n_0}(y)\in\overline{B_{\widetilde{\beta}}(\sigma)}$, then, by part (c) and (3.7), 
\begin{displaymath}
d(X_n(x),X_n(y))\leq \frac{1}{L_0}(\Vert X(X_n(x))\Vert+\Vert X(X_n(y))\Vert)\leq \widetilde{\delta}_0,\quad\forall n\leq n_0.
\end{displaymath}
By \eqref{L} we see that $\tau(y)=n_0+r_y$, $0\leq r_y<1$, and $\tau(x)=n_0+r_x$ with $\vert r_x\vert\leq L+1$. So, by (3.6) and the chain rule, we have by an easy computation the estimate \eqref{estimneighborhood}. 
\end{proof}

Now, let consider $\Lambda_+=\bigcap_{t\geq 0}X_{-t}(\Lambda\setminus V)$. 
\begin{lemma}
Given $\varepsilon_0>0$, there are positive numbers $\delta_1(\varepsilon_0), T_0$ and $K_2(\varepsilon_0)\approx 1$, and a neighborhood $W_0$ of $\Lambda_+$ such that for any $x,y\in W_0$ with $d(x,y)<\delta_1(\varepsilon_0)$, then 
\begin{equation}\label{estimateW}
    \frac{\vert\det DX_{t_1}(y)\vert_{E^c_y}\vert}{\vert\det DX_{t_1}(x)\vert_{E^c_x}\vert}\geq K_2(\varepsilon_0),\quad 0<t_1\leq T_0,  
\end{equation}
where $t_1\leq T_0$ is a first hyperbolic time for $x$ and  $y$. 
\end{lemma}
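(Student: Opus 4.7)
The plan is to combine the compactness of $\Lambda_+$, the openness of the hyperbolic-time condition, and the uniform continuity of the map $(x,s)\mapsto \vert\det DX_s(x)\vert_{E^c_x}\vert$ on compact time windows.

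I would first observe that $\Lambda_+$ is a compact subset of $\Lambda'$. Indeed, for $x\in\Lambda_+$ the forward orbit avoids the singular neighborhood $V$, so $x$ cannot lie in $W^s(\sigma)$ for any $\sigma\in Sing_{\Lambda}(X)$; otherwise $X_t(x)\to\sigma$ would force $X_t(x)\in V$ for all large $t$. Hence the ASH hypothesis, together with \eqref{ht}, supplies $C$-hyperbolic times at every point of $\Lambda_+$, and in particular a first positive hyperbolic time. Next I would produce a uniform upper bound $T_0$ for hyperbolic times on a whole neighborhood of $\Lambda_+$. The key remark is that the strict inequality $\vert\det DX_t(x)\vert_{E^c_x}\vert>e^{Ct}$ is open in $(x,t)$ (by continuity of $DX_t$ and of $E^c$, and by the freedom to perturb $t$ so as to turn the $\geq$ in \eqref{ht} into $>$). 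Thus each $x\in\Lambda_+$ has a time $t(x)>0$ and an open neighborhood $U_x$ on which $t(x)$ is a common hyperbolic time for every $z\in U_x$. By compactness, finitely many $U_{x_1},\dots,U_{x_N}$ cover $\Lambda_+$, and then $W:=\bigcup_i U_{x_i}$ together with $T_0:=\max_i t(x_i)$ do the job.

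For the ratio estimate I would invoke the uniform continuity of the continuous map $(x,s)\mapsto \vert\det DX_s(x)\vert_{E^c_x}\vert$ on the compact set $\overline{W}\times[0,T_0]$. Given $\varepsilon_0>0$ this yields $\delta_1=\delta_1(\varepsilon_0)>0$ and a constant $K_2(\varepsilon_0)$ close to $1$ (with $K_2(\varepsilon_0)\to 1$ as $\varepsilon_0\to 0$) such that
\begin{equation*}
d(x,y)<\delta_1\ \Longrightarrow\ \frac{\vert\det DX_s(y)\vert_{E^c_y}\vert}{\vert\det DX_s(x)\vert_{E^c_x}\vert}\geq K_2(\varepsilon_0)\quad\text{for every } 0<s\leq T_0.
\end{equation*}
Shrinking $W$ slightly to a neighborhood $W_0$ of $\Lambda_+$ in which any two $\delta_1$-close points sit inside a common $U_{x_i}$, one then takes $t_1:=t(x_i)\leq T_0$. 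This $t_1$ is a hyperbolic time shared by $x$ and $y$, and the displayed inequality becomes the bound \eqref{estimateW}.

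The principal subtlety is precisely the need for a single $t_1\leq T_0$ to serve simultaneously as a hyperbolic time for both $x$ and $y$; this is handled by the finite-cover argument together with the openness of the strict hyperbolic-time condition. Conveniently, no delicate analysis near singularities is required here: by construction $\Lambda_+$, and hence any sufficiently small neighborhood $W_0$ of it, is uniformly separated from $Sing_{\Lambda}(X)$, so $DX_t$ is uniformly continuous on $\overline{W_0}\times[0,T_0]$ and the estimate reduces to a standard compactness-plus-continuity computation.
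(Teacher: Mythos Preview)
Your proposal is correct and follows essentially the same route as the paper: both arguments exploit that $\Lambda_+\subset\Lambda'$ is compact, pick for each $x\in\Lambda_+$ a first hyperbolic time $t_1(x)$ together with a small ball on which the determinant varies little, extract a finite subcover to define $W_0$ and $T_0$, and then use a Lebesgue-number/half-radius trick so that any two $\delta_1$-close points of $W_0$ fall into a common ball and share the time $t_1(x_i)$. Your treatment is in fact slightly more explicit than the paper's on two points the paper leaves implicit---namely that $\Lambda_+$ is compact and disjoint from $W^s(Sing(X))$, and that $t_1$ can be taken as a genuine (common) hyperbolic time for both $x$ and $y$ via the openness of the strict inequality---but the underlying mechanism is identical.
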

\begin{proof}
Let $x\in\Lambda_+$ and let $t_1=t_1(x)$ be the first $C$-hyperbolic time for $x$. By continuity of $E^c_{\Lambda}$ and $DX_{(\cdot)}(\cdot)$, there is $\delta_x>0$ such that such that $d(X_{t}(x),X_{t}(y))\leq \varepsilon_0$ and
\begin{displaymath}
    \vert \vert\det DX_{t_1}(x)\vert_{E^c_x}\vert-\vert \det DX_{t_1}(y)\vert_{E^c_y}\vert\vert<\varepsilon_0 
\end{displaymath}
for every $t\in[0,t_1]$ and $y\in B_{\delta_x}(x)$.  By the above inequality we deduce that
\begin{displaymath}
 \frac{\vert\det DX_{t_1}(y)\vert_{E^c_y}\vert}{\vert\det DX_{t_1}(x)\vert_{E^c_x}\vert}\geq K_2(x,\varepsilon_0),\quad K_2(x,\varepsilon_0)\approx 1.
\end{displaymath}
By compactness of $\Lambda_+$ we have that $\Lambda_+\subset \bigcup_{i=1}^mB_{\delta_{x_i}/2}(x_i)=W_0$. So, we can to find the numbers $\delta_1(\varepsilon_0), T_0$ and $K_2(\varepsilon_0)>0$ such that if $x,y\in W_0$ with $d(x,y)<\delta_1(\varepsilon_0)$, then $x,y\in B_{\delta_{x_i}}(x_i)$ for some $i\in\lbrace1,\ldots, m\rbrace$, so that \eqref{estimateW} is obtained by the finiteness of the open cover $W_0$. 
\end{proof}

Let consider the compact set $\Lambda''=\Lambda\setminus(V\cup W_0)$, where $W_0$ is as the above lemma.  
\begin{lemma}
Given $\varepsilon_1>0$, there are positive numbers $\delta_2(\varepsilon_1),T_1$ and $K_3(\varepsilon_1)\approx 1$, and a neighborhood $W_1$ of $\Lambda''$ such that for every $x,y\in W_1$, with $d(x,y)\leq \delta_2(\varepsilon_1)$, there is $0<s\leq T_1$ such that $X_s(x),X_s(y)\in V$ and  
\begin{equation}\label{estimateoutside}
    \frac{\vert\det DX_s(y)\vert_{E^c_y}\vert}{\vert\det DX_s(x)\vert_{E^c_x}\vert}\geq K_3(\varepsilon_1).
\end{equation}
\end{lemma}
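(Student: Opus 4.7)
The plan is to combine two facts: every point of $\Lambda''$ enters $V$ in finite forward time, and on a bounded time window the map $z\mapsto\vert\det DX_t(z)\vert_{E^c_z}\vert$ is uniformly continuous. Together these will produce a uniform entry time $T_1$ and a uniform Jacobian-ratio bound.

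First I would show that each $x\in \Lambda''$ admits a first positive time $s_x$ with $X_{s_x}(x)\in V$. By construction $\Lambda''\subset\Lambda\setminus W_0\subset\Lambda\setminus\Lambda_+$, and since $\Lambda_+=\bigcap_{t\geq 0}X_{-t}(\Lambda\setminus V)$ there must exist $t>0$ with $X_t(x)\in V$. Openness of $V$, which is the union of the saddle-type neighborhoods $V_{\sigma}$ together with the small ambient boxes around each $\sigma$, lets me choose $s_x$ so that $X_{s_x}(x)$ lies in the interior of $V$.

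Next, continuity of the flow at $X_{s_x}(x)\in V$ gives a radius $r_x>0$ for which $X_{s_x}(y)\in V$ for every $y\in B_{r_x}(x)$. Shrinking $r_x$ further using uniform continuity of $z\mapsto\vert\det DX_{s_x}(z)\vert_{E^c_z}\vert$ on $\overline{U}\setminus V$, I can also arrange that
\[
\frac{\vert\det DX_{s_x}(y)\vert_{E^c_y}\vert}{\vert\det DX_{s_x}(x')\vert_{E^c_{x'}}\vert}\geq K_3(x,\varepsilon_1),\quad x',y\in B_{r_x}(x),
\]
for a constant $K_3(x,\varepsilon_1)\approx 1$ depending only on $\varepsilon_1$ and $x$. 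Compactness of $\Lambda''$ then yields a finite subcover $\Lambda''\subset\bigcup_{i=1}^m B_{r_{x_i}/2}(x_i)$, and I would set $T_1=\max_i s_{x_i}$, $\delta_2(\varepsilon_1)=\tfrac12\min_i r_{x_i}$, $K_3(\varepsilon_1)=\min_i K_3(x_i,\varepsilon_1)$, and $W_1=\bigcup_i B_{r_{x_i}/2}(x_i)$. If $x,y\in W_1$ satisfy $d(x,y)\leq\delta_2(\varepsilon_1)$, then $x\in B_{r_{x_i}/2}(x_i)$ for some $i$, hence $y\in B_{r_{x_i}}(x_i)$, and taking $s=s_{x_i}\leq T_1$ produces the desired conclusion.

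The main obstacle I anticipate is the entry-time argument: one needs $V$ to be open (so that the condition $X_{s_x}(\cdot)\in V$ persists for $y$ near $x$) and needs the first-entry time to admit a uniform upper bound over $\Lambda''$. The former is true by inspection of the construction of each $V_\sigma$ from the cross sections $\widetilde{\Sigma_\sigma^{i,\pm}}$ and the flight-time relation \eqref{vuelo}; the latter is exactly where compactness of $\Lambda''$ is genuinely used. The Jacobian-ratio step is comparatively routine because the time window $[0,T_1]$ is bounded and because $DX_{(\cdot)}(\cdot)$ and (the continuous extension of) $E^c$ are continuous on $\overline{U}$.
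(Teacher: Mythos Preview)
Your argument is correct and follows essentially the same route as the paper's proof: both obtain a finite forward entry time into $V$ for each $x\in\Lambda''$ from the fact that $\Lambda''\cap\Lambda_+=\emptyset$, use continuity of the flow and of $z\mapsto\vert\det DX_{t}(z)\vert_{E^c_z}\vert$ to control nearby points on the resulting bounded time window, and then invoke compactness of $\Lambda''$ to pass to uniform constants $T_1$, $\delta_2(\varepsilon_1)$, $K_3(\varepsilon_1)$ and the neighborhood $W_1$. Your half-radius refinement making sure that $d(x,y)\leq\delta_2$ forces both points into the same ball $B_{r_{x_i}}(x_i)$ is the standard way to make the compactness step precise.
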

\begin{proof}
Note that for $x\in\Lambda''$, there is $t_x>0$ such that $X_{t_x}(x)\in V$, otherwise we get that $x\in \Lambda_+\subset W_0$ which is impossible. So, by continuity of $X_{(\cdot)}(\cdot)$ and $DX_{(\cdot)}(\cdot)$, there are $\delta_x>0$ and $K_3(x,\varepsilon_1)\approx1$ such that $X_{t_x}(y)\in V$,  
\begin{displaymath}
    \vert \vert\det DX_{t}(x)\vert_{E^c_x}\vert-\vert \det DX_{t}(y)\vert_{E^c_y}\vert\vert<\varepsilon_1 
\end{displaymath}
and  
\begin{displaymath}
 \frac{\vert\det DX_{t_x}(y)\vert_{E^c_y}\vert}{\vert\det DX_{t_x}(x)\vert_{E^c_x}\vert}\geq K_3(x,\varepsilon_1), 
\end{displaymath}
for every $t\in[0,t_x]$ and $y\in B_{\delta_x}(x)$. In this way, we get an open cover of $\Lambda''$. So, the result is obtained by compactness of $\Lambda''$. 
\end{proof}

\begin{lemma}
Let $\widetilde{U}=\overline{U\setminus V}$, where $U$ is the basin of attraction of $\Lambda$. There are $\beta'>0$ and $\varepsilon_2>0$ such that if $x\in \widetilde{U}$, and $y,z\in B_{\varepsilon_2}(x)$, $z\in\mathcal{O}(y)$, then $z=X_u(y)$, $\vert u\vert<2\beta'$. 
\end{lemma}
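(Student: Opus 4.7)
The plan is a uniform tubular flow argument combined with a finite-cover compactness step. First I would note that, since the open set $V$ constructed above contains a full neighborhood of every singularity in $\Lambda$, the set $\widetilde{U}=\overline{U\setminus V}$ is compact and disjoint from $Sing(X)$; in particular the positive constant $c_0:=\inf_{x\in\widetilde{U}}\Vert X(x)\Vert$ is well defined.

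For each $x\in\widetilde{U}$ the flow-box theorem yields a $C^1$ cross-section $\Sigma_x$ through $x$, a number $\beta_x>0$ and a $C^1$ diffeomorphism $\Phi_x:\Sigma_x\times(-2\beta_x,2\beta_x)\to W_x$ given by $\Phi_x(s,t)=X_t(s)$, where $W_x$ is an open neighborhood of the orbit arc $X_{[-2\beta_x,2\beta_x]}(x)$. I would then pick $\varepsilon_x>0$ small enough that $B_{2\varepsilon_x}(x)\subset\Phi_x(\Sigma_x\times(-\beta_x,\beta_x))$, so that every point of $B_{2\varepsilon_x}(x)$ admits unique flow-box coordinates $(s,t)$ with $|t|<\beta_x$.

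By compactness of $\widetilde{U}$, extract a finite subcover $\widetilde{U}\subset\bigcup_{i=1}^m B_{\varepsilon_{x_i}}(x_i)$ and set $\beta':=\max_i\beta_{x_i}$ and $\varepsilon_2:=\min_i\varepsilon_{x_i}$. For any $x\in\widetilde{U}$, choose $i$ with $x\in B_{\varepsilon_{x_i}}(x_i)$; then $B_{\varepsilon_2}(x)\subset B_{2\varepsilon_{x_i}}(x_i)\subset W_{x_i}$. Any two points $y,z\in B_{\varepsilon_2}(x)$ admit flow-box representations $y=\Phi_{x_i}(s_y,t_y)$ and $z=\Phi_{x_i}(s_z,t_z)$ with $|t_y|,|t_z|<\beta_{x_i}$. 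Combining $z\in\mathcal{O}(y)$ with the one-passage property (that in a sufficiently thin tube any single orbit crosses the local cross-section at most once) one obtains $s_y=s_z$, and hence $z=X_u(y)$ with $u=t_z-t_y$ satisfying $|u|<2\beta_{x_i}\le 2\beta'$.

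The main obstacle I expect is establishing the one-passage property, namely that the orbit of $y$ does not exit $W_{x_i}$ and re-enter it through a different fiber of $\Sigma_{x_i}$ before reaching $z$. My approach would be to use continuity of the flow together with the uniform lower bound $\Vert X\Vert\ge c_0$ on $\widetilde{U}$ to shrink the transverse diameter of each flow box so that consecutive intersections of a single orbit with $\Sigma_{x_i}$ inside $B_{2\varepsilon_{x_i}}(x_i)$ are separated in time by more than $2\beta_{x_i}$; this forces $s_y=s_z$ and yields the claimed bound $|u|<2\beta'$.
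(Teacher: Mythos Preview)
Your approach is essentially the paper's: local flow boxes at each regular point of $\widetilde U$ plus compactness to uniformize the constants. The only organizational differences are that the paper fixes one $\beta'$ for all $x$ in advance, via uniform continuity of the flow on $\widetilde U$ (which makes it transparent that $\beta'$ may be taken arbitrarily small, as required in the remark immediately after the lemma), and then covers $\widetilde U$ by neighborhoods $B_{\varepsilon_x}(D_x)$ of compact pieces $D_x\subset\Sigma_x$ and takes $\varepsilon_2$ to be half the Lebesgue number of that cover; you instead let $\beta_x$ vary with $x$, set $\beta'=\max_i\beta_{x_i}$, and use the doubled-ball trick with $\varepsilon_2=\min_i\varepsilon_{x_i}$.

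You are right to isolate the one-passage issue; the paper simply asserts the conclusion once $y,z$ lie in a common flow box and does not argue that $s_y=s_z$. Your proposed repair, however, does not close the gap. Arranging that consecutive crossings of $\Sigma_{x_i}$ by a single orbit inside $B_{2\varepsilon_{x_i}}(x_i)$ are separated in time by more than $2\beta_{x_i}$ only tells you that \emph{if} $s_y\ne s_z$ then the time $u$ from $y$ to $z$ is large; it does not rule that situation out. Read literally for the full orbit $\mathcal O(y)$, the statement in fact fails on any transitive $\Lambda$: a dense orbit revisits $B_{\varepsilon_2}(x)$ through distinct fibers no matter how small $\varepsilon_2$ is, so no shrinking of the transverse diameter can force $s_y=s_z$. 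In the later applications the two points being compared are always known to sit on the \emph{same short orbit arc} (one is $X_{\theta(t)}(y)$ and the other $X_{\theta(t')}(y)$ with $\theta$ increasing and $t'$ close to $t$), and for that the flow-box coordinates alone already give the bound $|u|<2\beta'$. So either restrict the hypothesis to the local orbit of $y$, or record directly the flow-box statement you actually need.
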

\begin{proof}
Note that every point of $\widetilde{U}$ is regular and it is away from any singularity of $\Lambda$. Thus, for every $x\in\widetilde{U}$, it is possible to choice a small cross section $\Sigma_x$ containing $x$, with $\Sigma_x\cap Sing_{\Lambda}(X)=\emptyset$. Moreover, we can choose these cross sections in a such way that $B_{\alpha_0}(\Sigma_x)=\emptyset$ for a fixed $\alpha_0$ small enough. By uniform continuity of the flow in $\widetilde{U}$, there is $\beta'>0$ such that $X_{[-\beta',\beta']}(z)\in B_{\alpha_0}(\Sigma_x)$ for any $z\in \Sigma_x$. Let consider
\begin{displaymath}
 \widetilde{V}_x=\bigcup_{z\in\mathring{\Sigma}_x}X_{(-\beta',\beta')}(z),
\end{displaymath}
where $\mathring{A}$ denotes the interior of $A$, and let $D_x$ a compact subset of $\mathring{\Sigma_x}$ that contains $x$. Note that for every $z\in D_x$, there is $\varepsilon_z>0$ such that $B_{\varepsilon_z}(z)\subset\widetilde{V}_x$ because $\widetilde{V}_x$ is an open set. Since $D_x\subset\bigcup_{z\in D_x}B_{\varepsilon_z}(z)$ there is, by compactness of $D_x$, a positive number $\varepsilon_x$ such that $B_{\varepsilon_x}(D_x)\subset\widetilde{V}_x$. So, we have that 
\begin{displaymath}
\widetilde{U}\subset\bigcup_{x\in\widetilde{U}}B_{\varepsilon_x}(D_x). 
\end{displaymath}
Thus, by compactness of $\widetilde{U}$, there exists the Lebesgue number $\ell>0$. So, if define $\varepsilon_2=\ell/2$,  we obtain the desired result.    
\end{proof}
\begin{remark}
In the above lemma, is possible to choose $\beta'>0$ such that
\begin{equation}\label{estimatechico}
\beta'<\frac{\varepsilon}{2}\quad\text{and}\quad K_4(\beta')=\min_{(z,s)\in\widetilde{U}\times [-\beta',\beta']}\vert\det DX_{s}(z)\vert_{E^c_z}\vert\approx 1.  
\end{equation}
\end{remark}

In a similar way, we can obtain the next result: 
\begin{lemma}
Let $L>0$ as in \eqref{L}. Then, there is $\delta_3>0$ with the following property: For $x\in\Sigma_{\sigma}^{o,i,\pm}$, $\sigma\in Sing_{\Lambda}(X)$, and every $z\in M$ with $d(x,z)<\delta_3$, there exists $r\in\mathbb{R}$, with $\vert r\vert<L$, such that $X_r(z)\in\Sigma_{\sigma}^{o,i,\pm}$.
\end{lemma}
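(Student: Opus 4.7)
The plan is to combine the tubular flow theorem with a compactness/Lebesgue number argument applied to each of the finitely many cross sections simultaneously. First I would fix some $\sigma\in Sing_{\Lambda}(X)$ and one of the cross sections, which for brevity I will call $\Sigma=\Sigma_{\sigma}^{o,i,\pm}$. Every point $x\in\Sigma$ is a regular point of $X$, because by construction $\Sigma\subset\partial B_{\beta_1}(\sigma)$ stays a positive distance away from $\sigma$, and $\Sigma$ is transverse to $X$. Hence by the tubular flow (flow box) theorem there exist an open neighborhood $V_x\subset M$ of $x$ and a continuous function $r_x:V_x\to\mathbb{R}$ with $r_x(x)=0$ such that $X_{r_x(z)}(z)\in\Sigma$ for every $z\in V_x$.

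Next I would use the continuity of $r_x$ at $x$, together with the strict inequality $r_x(x)=0<L$, to shrink $V_x$ so that $|r_x(z)|<L$ throughout $V_x$. The family $\{V_x\}_{x\in\Sigma}$ is then an open cover of $\Sigma$; since $\Sigma=h([-1,1]\times[-1,1])$ is compact, a finite subcover $V_{x_1},\ldots,V_{x_k}$ exists and admits a Lebesgue number $\delta_{\Sigma}>0$. Consequently, for any $x\in\Sigma$ and any $z\in M$ with $d(x,z)<\delta_{\Sigma}$, the point $z$ belongs to some $V_{x_i}$, and taking $r=r_{x_i}(z)$ yields $|r|<L$ with $X_r(z)\in\Sigma$.

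Finally, since the singularities of $\Lambda$ are hyperbolic and $\Lambda$ is compact, $Sing_{\Lambda}(X)$ is finite, and therefore the full family $\{\Sigma_{\sigma}^{o,i,\pm}:\sigma\in Sing_{\Lambda}(X)\}$ is finite as well. Setting $\delta_3$ to be the minimum of the constants $\delta_{\Sigma}$ over this finite family produces the uniform constant required by the statement.

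No substantial obstacle is expected; this is essentially a routine flow-box-plus-compactness argument that parallels the one used in Lemma 3.5. The only delicate point is ensuring that the local first-return time $r_x$ is kept strictly below $L$ on each flow box, which is immediate from the continuity of $r_x$ at $x$ where $r_x(x)=0$. Morally, the lemma just records that the first hit time to the compact transverse section $\Sigma_\sigma^{o,i,\pm}$ is uniformly small in a neighborhood of the section, with the explicit upper bound $L$ inherited from \eqref{L}.
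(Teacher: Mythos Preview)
Your proposal is correct and follows essentially the same approach as the paper, which does not give a separate proof but simply states that the lemma is obtained ``in a similar way'' to the preceding lemma (the flow-box plus compactness/Lebesgue-number argument you outline). Your reference to ``Lemma 3.5'' should presumably be to that preceding lemma, and one minor technical point worth flagging is that at boundary points of $\Sigma$ the flow-box projection need not land back in $\Sigma$; this is routinely handled by extending the cross section slightly or by restricting to the relevant points of $\Lambda\cap\Sigma$, which lie at positive distance from $\partial^{cu}\Sigma$ by the adapted property.
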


Now, by continuity of $DX_{(\cdot)}(\cdot)$, there is $a_0>0$ such that for every $z\in U$ and every two-dimensional subspace $\Pi_z\subset C_{a_0}^c(z)$ one has 
\begin{equation}\label{disto1}
\frac{\vert\det DX_t(z)\vert_{\Pi_z}\vert}{\vert\det DX_t(z)\vert_{E^c_z}\vert}\geq J_0(a_0)\approx 1,\quad\forall t\in[0,1].     
\end{equation}
Finally, by Theorem 2.2 every singularity $\sigma$ contained in an ASH attractor is either Lorenz-like or Rovella-like or resonant. In particular, there is a constant $J_1>0$ and numbers $\alpha_+>0$ and $\alpha_-<0$ such that for every $x\in V_{\sigma}$, where $V_{\sigma}$ is as in \eqref{vecindad}, and for every $t>0$ such that $X_t(x)\in V_{\sigma}$,
\begin{displaymath}
\vert \det DX_t(x)\vert_{E^c_x}\vert\geq
\begin{cases}
J_1e^{\alpha_{+}t}  &  \text{if  }\sigma\text{ is Lorenz -like}\\
J_1e^{\alpha_{-}t}   &  \text{if  }\sigma\text{ is Rovella -like or resonant}.
\end{cases}
\end{displaymath}
\begin{remark}
The above relation allows us to find a positive number $T_2$ and a constant $0<C_1<C$ such that if $t_x\geq T_2$ is a $C$-hyperbolic time for $x\in\Lambda'$, there is a $C_1$-hyperbolic time $t_x'>t_x>0$ satisfying $X_{t_x'}(x)\in\widetilde{\Sigma^{i,o,\pm}}$. 
\end{remark}

The previous lemmas and remarks are used to prove the following result: 
\begin{lemma}
There exists positive numbers $\delta_4$, $T$, $c_*$ such that given $x\in\Lambda'$, $y\in U$, and a continuous and increasing function $\theta:\mathbb{R}\to\mathbb{R}$ satisfy
\begin{displaymath}
d(X_t(x),X_{\theta(t)}(y))\leq\delta_4\Vert X(X_t(x))\Vert,\quad\forall t\in\mathbb{R},
\end{displaymath}
and given a $C$-hyperbolic time $t_x\geq T$, there is  $t_y>0$ such that $X_{t_y}(y)=X_{\theta(t_x)}(y)$ and 
\begin{equation}\label{expansion}
\vert\det DX_{t_y}(y)\vert_{E^c_y}\vert\geq e^{c_*t_x}.
\end{equation}
\end{lemma}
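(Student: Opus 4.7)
The plan is to decompose the orbit arc $\{X_t(x):0\le t\le t_x\}$ into consecutive pieces classified by which of the three regions the point currently lies in---the singular neighborhood $V$, the set $W_0\setminus V$ around $\Lambda_+$, or $W_1\setminus V$ around $\Lambda''$---and to apply Lemma~3.1, Lemma~3.2, or Lemma~3.3 respectively on each piece to bound the ratio $|\det DX_s(y)|_{E^c_y}|/|\det DX_s(x)|_{E^c_x}|$ from below. Multiplying these segment ratios and combining with the $C$-hyperbolic time estimate $|\det DX_{t_x}(x)|_{E^c_x}|\ge e^{Ct_x}$ will yield $|\det DX_{t_y}(y)|_{E^c_y}|\ge e^{c_* t_x}$ with $t_y:=\theta(t_x)$.

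To arrange the segmentation cleanly, I first invoke Remark~3.5 to replace $t_x$ by a slightly larger $C_1$-hyperbolic time (with $0<C_1<C$ fixed) at which $X_{t_x}(x)\in\widetilde{\Sigma^{i,o,\pm}}$, so the decomposition of $[0,t_x]$ terminates on a cross-section. Fix a small tolerance $a>0$, and choose $\widetilde\varepsilon_0,\varepsilon_0,\varepsilon_1>0$ so small that $C_0(\widetilde\varepsilon_0)\ge e^{-a/2}$, $K_2(\varepsilon_0)\ge e^{-a/2}$, and $K_3(\varepsilon_1)\ge e^{-a/2}$. Define $\delta_4$ as the minimum of the corresponding $\delta_0,\delta_1(\varepsilon_0),\delta_2(\varepsilon_1),\delta_3$ produced by Lemmas~3.1--3.3 and~3.5, together with a threshold drawn from Lemma~3.4 ensuring that points rescaled-close to $X_t(x)$ lie on the corresponding orbit segment of $y$. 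The rescaled closeness then forces $X_{\theta(t)}(y)$ to traverse the same sequence of regions as $X_t(x)$, with segment lengths matching to bounded additive errors, as quantified near singularities by Lemma~3.1 and at cross-section transitions by Lemma~3.5.

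Partitioning $[0,t_x]$ into maximal subintervals $J_1,\ldots,J_N$ of constant region type and applying the relevant lemma on each yields a ratio $R_i$ on $J_i$: $R_i\ge K_1 C_0^{|J_i|}\ge K_1 e^{-(a/2)|J_i|}$ on $V$-pieces, $R_i\ge K_2\ge e^{-a/2}$ on $W_0$-pieces (iteratively in $T_0$-blocks, each supplying one such factor), and $R_i\ge K_3\ge e^{-a/2}$ on $W_1$-pieces. By the flight-time formula~\eqref{vuelo} each $V$-passage has length at least $\tfrac{\lambda_u+1}{\lambda_u}n_0$, so the number of pieces is at most linear in $t_x$ and the product $\prod_i R_i$ is at least $e^{-a t_x}$ times a bounded overall constant. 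By the chain rule,
\[
|\det DX_{t_y}(y)|_{E^c_y}|\;\ge\;\Bigl(\prod_{i=1}^N R_i\Bigr)\,|\det DX_{t_x}(x)|_{E^c_x}|\;\ge\; e^{(C_1-a)\,t_x}.
\]
Choosing $a<C_1$ and setting $c_*:=C_1-a$ and $T$ at least $T_2$ (large enough that the bounded overall constant is absorbed into a small fraction of $C_1 t_x$) completes the argument.

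The main obstacle is the geometric bookkeeping: one must verify that the rescaled closeness actually forces $X_{\theta(t)}(y)$ to traverse exactly the same sequence of regions as $X_t(x)$ with compatible timings. The delicate spot is near singularities, where $\|X\|$ is small so $\delta_4\|X(X_t(x))\|$ still permits $y$ to drift significantly in absolute terms; this is precisely what Lemma~3.1 is designed to resolve, by matching the $V$-entry index $n$ of $y$ to within $\pm 1$ of that of $x$. A secondary subtlety is iterating Lemma~3.2 on long $W_0$-segments while absorbing the drift of $\theta$ into bounded-factor losses using the uniform constant $K_4(\beta')\approx 1$ from Remark~3.4. Once these matches are secured, the final multiplicative estimate is a straightforward chain-rule computation.
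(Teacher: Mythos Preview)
Your proposal is correct and follows essentially the same strategy as the paper's proof: decompose $[0,t_x]$ according to whether the orbit is passing through $V$, $W_0$, or $W_1$, apply Lemmas~3.1--3.3 and~3.5 on the respective segments, multiply the resulting determinant ratios via the chain rule, and absorb the near-unit multiplicative losses into a constant $c_*<C_1$. The only detail you do not mention explicitly is the cone-field correction $J_0(a_0)$ from~\eqref{disto1}, needed because the extended subbundle $E^c$ on $U$ is not $DX_t$-invariant, but this fits into the bookkeeping you already allude to and is handled identically in the paper.
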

\begin{proof}
Let $0<C_1<C$ given by Remark 3.5. Take $\beta', \varepsilon_2>0$ as in Lemma 3.4, positive numbers $\widetilde{\varepsilon}_0$, $\varepsilon_0,\varepsilon_1<\varepsilon_2$, and positive constants $\alpha$, $c^*$, $a_0$ (as in \eqref{disto1}) satisfying
\begin{equation}\label{exptime}
    C_1-(\alpha+\vert\ln C_0\vert+\vert\ln J_0(a_0)\vert+ \vert\ln K_2(\varepsilon_0)\vert+\vert\ln K_3(\varepsilon_0)\vert+\vert\ln K_4(\beta')\vert)>c^*>0.
\end{equation}
Let $\delta_0$, $\delta_1(\varepsilon_0)$, $\delta_2(\varepsilon_1)$  and $\delta_3$ given by Lemma 3.1, Lemma 3.2, Lemma 3.3 and Lemma 3.5 respectively. Let consider
\begin{displaymath}
0<\delta_4<\min\left\lbrace \delta_0,\frac{\delta_1(\varepsilon_0)}{K_m},\frac{\delta_2(\varepsilon_1)}{K_m},\frac{\varepsilon_2}{K_m},\frac{\delta_3}{K_m}\right\rbrace,\quad K_m=\max_{z\in\Lambda}\Vert X(z)\Vert.
\end{displaymath}
Let $x\in\Lambda'$, $y\in U$, and $\theta$ satisfying the condition given in the statement of lemma. Let $s_0\geq 0$ (if it exists) the first flight time satisfying $x_0=X_{s_0}(x)\in \widetilde{\Sigma_{\widetilde{\beta}}^{i,\pm}}$, where $\widetilde{\beta}$ is given by Lemma 3.1. Then, by the choice of $\delta$, we have that 
\begin{displaymath}
d(X_{s_0}(x),X_{\theta(s_0)}(y))\leq\delta\Vert X(X_{s_0}(x))\Vert<\delta_3,
\end{displaymath}
and
\begin{displaymath}
d(X_{s_0+\tau(x_0)}(x),X_{\theta(s_0+\tau(x_0))}(y))\leq\delta\Vert X(X_{\tau(x_0)}(x_0))\Vert<\delta_3.
\end{displaymath}
Moreover, by Lemma 3.1 and Lemma 3.5,
\begin{equation}\label{tsing}
\theta(s_0+\tau(x_0)))=\theta(s_0)+\tau(y')+r_y,
\end{equation}
where $y'\in\widetilde{\Sigma^{i,\pm}}$, $\vert\tau(x_0)-\tau(y')\vert<L$ and $\vert r_y\vert< 2L$. Besides, by letting $\tau(y')=N(y)+t_{y}$, $t_{y}\in[0,1)$, we have by \eqref{disto1} that
 \begin{equation}\label{e1}
\vert \det DX_{\tau(y')+r_y}(X_{\theta(s_0)}(y))\vert_{\Pi_{X_{\theta(s_0)}(y)}}\vert\geq J_0(a_0)^{N(y')+1}K' C_0^{\tau(x_0)}\vert \det DX_{\tau(x_0)}(x_0)\vert_{E^c_{x_0}}\vert,
\end{equation}
where $K'=K_0^2K_1$ and $K_0=\min_{(z,s)\in\widetilde{U}\times [-L,L]}\vert\det DX_{s}(z)\vert_{E^c_z}\vert$. 

Now, according to the choice of $\delta$ we see that if $x_1=X_{s_0+\tau(x_0)}(x)$ and $y_1=X_{\theta(s_0+\tau(x_0))}(y)$ belongs to $W_0$, then, by Lemma 3.2 and Lemma 3.4 we have that
\begin{equation}\label{t1}
    \theta(s_0+\tau(x_0)+t_1)=\theta(s_0+\tau(x_0))+t_1+r_{y_1},\quad \vert r_{y_1}\vert<2\beta',
\end{equation}
where $t_1=t_1(x_1)\leq T_0$ is a first $C$-hyperbolic time for $x_1$, and by \eqref{disto1}
\begin{equation}\label{e2}
     \vert\det DX_{t_1+r_{y_1}}(y_1)\vert_{\Pi_{y_1}}\vert\geq J_0(a_0)^{N(y_1)+1}K_4(\beta')K_2(\varepsilon_0)\vert\det DX_{t_1}(x_1)\vert_{E^c_{x_1}}\vert,
\end{equation}
where $N(y_1)$ satisfies $t_1=N(y_1)+r_{y_1}'$, $r_{y_1}'\in[0,1)$, whereas if $x_1$ and $y_1$ belongs to $W_1$, by Lemma 3.2 we have that $X_{s_1}(x_1)\in \widetilde{\Sigma_{\widetilde{\beta}}^{i,\pm}}$, $0\leq s_1=s_1(x_1)\leq T_1$. Moreover, 
\begin{equation}\label{t2}
    \theta(s_0+\tau(x_0)+s_1)=\theta(s_0+\tau(x_0))+s_1+r_{y_1},\quad \vert r_{y_1}\vert<2\beta',
\end{equation}
and by \eqref{disto1}
\begin{equation}\label{e3}
    \vert\det DX_{s_1+r_{y_1}}(y_1)\vert_{E^c_{y_1}}\vert\geq J_0(a_0)^{N(y_1)+1}K_4(\beta')K_3(\varepsilon_0)\vert\det DX_{s_1}(x_1)\vert_{\Pi_{x_1}}\vert,
\end{equation}
where $N'(y_1)$ satisfies $s_1=N'(y_1)+r_{y_1}'$, $r_{y_1}'\in[0,1)$. 

Let 
\begin{displaymath}
x_n =
\begin{cases}
X_{t_{n-1}}(x_{n-1})  &  \text{if  }x_{n-1}\in W_0\\
X_{s_{n-1}}(x_{n-1})   &  \text{if  }x_{x_{n-1}}\in W_1
\end{cases}, 
n\geq 2,
\end{displaymath} 
and let consider the following sets: 
\begin{itemize}
\item $A=\lbrace n\in\mathbb{N}: x_n\in W_0\rbrace$, $n_A=\#A$
\item $B=\lbrace n\in\mathbb{N} : x_n\in W_1\rbrace$, $n_B=\#B$, and 
\item $O=\lbrace n\in\mathbb{N} : x_n\in\widetilde{\Sigma_{\widetilde{\beta}}^{i,\pm}}\setminus(W_0\cup W_1) \rbrace$, $n_O=\#O$.
\end{itemize}
Define 
\begin{eqnarray*}
t_x'&=&s_0+\sum_{m\in A}(t_n)+\sum_{n\in B}(s_n)+\sum_{n\in O}(\tau_n),\\
t_y'&=&\theta(s_0)+\sum_{n\in A}(t_n+r_{y_n})+\sum_{n\in B}(s_n+r_{y_n})+\sum_{n\in O}(\tau_n+r_{y_n}),
\end{eqnarray*}
and denote $\varphi_t(z)=\vert \det DX_{t}(z)\vert_{E_x^c}\vert$. Note that every $C$-hyperbolic time $t_x$ can be written as $t_x=t_x'+r_x$, with $r_x\in[0,T_m)$, where $T_m=\max\lbrace T_0,T_1 \rbrace$. In this case, define $t_y=t_y'+r_x+s_y$, where $\vert s_y\vert<2\beta'$.

The relations \eqref{tsing} and \eqref{t2} show that $X_{t_y}=X_{\theta(t_x)}$, and by the estimations \eqref{e1}, \eqref{e2} and \eqref{e3} we have that 
\begin{eqnarray*}
\vert\det DX_{t_y}(y)\vert_{E^c_y}\vert&=&\rho\prod_{n\in A}\varphi_{t_n+r_{y_n}}(x_n)\prod_{n\in B}\varphi_{s_n+r_{y_n}}(x_n)\prod_{n\in O}\varphi_{\tau_n+r_{y_n}}(x_n) \\
&\geq&\rho J_0(a_0)^{N_y}\cdot K_2(\varepsilon_0)^{n_A}\cdot K_3(\varepsilon_0)^{n_B}\cdot \\
&& K_4(\beta')^{n_A+n_B}\cdot (K')^{n_O}\cdot C_0^{\sum_{n\in O}\tau(x_n)}\varphi_{t_x}(x),
\end{eqnarray*}
where 
\begin{displaymath}
N_y=\sum_{n}N(y_n)+(n_A+n_B+n_O). 
\end{displaymath}
Since $t_x$ is a $C$-hyperbolic time for $x$, by the above estimate, we deduce that 
\begin{equation}\label{final}
\vert\det DX_{t_y}(y)\vert_{E^c_y}\vert\geq \rho e^{\left(C_1+N(C_0)+N(K')+N(J_0)+ N(K_2)+N(K_3)+N(K_4)\right)t_x},
\end{equation}
where 
\begin{displaymath}
N(C_0)=\left(\frac{\sum_{n\in O}\tau(x_n)}{t_x}\right)\ln C_0,\quad N(K')=\frac{n_O}{t_x}\ln K',
\end{displaymath}
\begin{displaymath}
N(J_0)=\frac{N_y}{t_x}\ln J_0(a_0),\quad N(K_2)=n_A\frac{\ln K_2(\varepsilon_0)}{t_x},
\end{displaymath}
and
\begin{displaymath}
N(K_3)=n_B\frac{\ln K_3(\varepsilon_0)}{t_x},\quad N(K_4)=(n_A+n_B)\frac{\ln K_4(\beta')}{t_x}.
\end{displaymath} 
First, since $t_x>\sum_{n\in O}\tau(x_n)$ it follows that 
\begin{displaymath}
\vert N(C_0)\vert \leq \vert \ln C_0\vert. 
\end{displaymath}
Second, by \eqref{vuelo},
\begin{displaymath}
\left\vert\frac{n_O}{t_x}\ln K'\right\vert\leq \frac{\vert\ln K'\vert}{n_0},
\end{displaymath}
so that, by shrinking $V_{\widetilde{\beta}}$ if it is necessary, we have that \begin{displaymath}
\left\vert\frac{n_O}{t_x}\ln K'\right\vert\leq\alpha.
\end{displaymath}
On the other hand, since $(n_A+n_B+n_O)/t_x\leq 1$ and $N_y/t_x\leq 2$ we obtain the following estimations:
\begin{itemize}
    \item $\vert N(J_0)\vert\leq\vert\ln J_0(a_0)\vert$,
    \item $\vert N(K_2)\vert\leq \vert\ln K_2(\varepsilon_0)\vert$, 
    \item $\vert N(K_3)\vert\leq \vert\ln K_3(\varepsilon_0)\vert$ and 
    \item $\vert N(K_4)\vert\leq \vert\ln K_4(\beta')\vert$.
\end{itemize}
So, by \eqref{exptime} and \eqref{final} it follows that 
\begin{eqnarray*}
\vert\det DX_{t_y}(y)\vert_{E^c_y}\vert&\geq& \rho e^{\left(C_1-(\alpha+\vert\ln C_0\vert+\vert\ln J_0(a_0)\vert+ \vert\ln K_2(\varepsilon_0)\vert+\vert\ln K_3(\varepsilon_0)\vert+\vert\ln K_4(\beta)\vert)\right)t_x}\\
&\geq &\rho e^{c^*t_x}.
\end{eqnarray*}

Finally, if $T>0$ satisfies $\rho e^{c^*t}\geq e^{c_*t}$, $0<c_*<c^*$, for any $t\geq T$ we have, for every $C$-hyperbolic time $t_x\geq T$, that 
\begin{displaymath}
\vert\det DX_{t_y}(y)\vert_{E^c_y}\vert\geq \rho e^{c_*t_x}\geq e^{c_*t_x}.
\end{displaymath}
This concludes the proof.
\end{proof}

Now, the following construction can be seen as a rescaled version of the tube-like domain introduced in \cite{AP}. For a regular point $x\in M$, define
\begin{displaymath}
N^r_{\beta}(x)=exp_x(\lbrace v\in T_xM : v\perp X(x)\rbrace\cap B_{\beta ||X(x)||}(0)),
\end{displaymath}
where $B_r(0)$ is the open ball in $T_xM$ of radius $r$ and centered at $0$. 

\begin{lemma}[\cite{WW}]\label{RFB}
Suppose that $X$ is a $C^1$-vector field and let $X_t$ be the flow induced by $X$. Then, there exist $L>0$ and a small $\beta_0>0$ such that for any $0<\beta<\beta_0$, $t>0$ and $x\in M \setminus Sing(X)$ we have:
 \begin{enumerate}
\item The set $X_{[-\beta\Vert X(x)\Vert,\beta\Vert X(x)\Vert]}(N^r_{\beta}(x))$ is a flow box, in particular it does not contain singularities.
\item The open ball $B_{\beta\Vert X(x)\Vert}(x)$ is contained on  $X_{[-\beta\Vert X(x)\Vert,\beta\Vert X(x)\Vert]}(N^r_{\beta}(x))$.

\item The holonomy map   $$P_{x,t}:N_{\frac{\beta}{L^{t}}\Vert X(x)\Vert}^r(X) \to N^r_{\beta\Vert X(X_t(x))\Vert}(X_t(x))$$ is well defined and injective. Moreover, for any $y\in N_{\frac{\beta}{L^{t}}\Vert X(x)\Vert}^r(X)$ we have $$d(X_s(x),X_s(y))\leq \beta\Vert X(X_s(x))\Vert$$ for any $0\leq s\leq t$. The same statement is valid for $t<0$.  

\end{enumerate}

\end{lemma}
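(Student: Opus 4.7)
The plan is to combine the standard flow-box construction with the uniform exponential derivative bound coming from $C^1$-smoothness of $X$ on the compact manifold $M$, and to then keep careful bookkeeping of how the rescaling by $\|X(x)\|$ behaves along orbits.

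First I would fix the Lipschitz constants. Since $\|DX\|_{\infty}$ is finite, Gronwall's inequality applied to the variational equation furnishes a constant $\tilde L>1$ with $\|DX_t(z)\|\le\tilde L^{|t|}$ for every $z\in M$ and $t\in\mathbb R$. Using the identity $DX_t(z)X(z)=X(X_t(z))$ this upgrades to the bi-Lipschitz comparison $\tilde L^{-|t|}\|X(z)\|\le\|X(X_t(z))\|\le\tilde L^{|t|}\|X(z)\|$. I would then take the constant appearing in the lemma to be $L=\tilde L^{2}$; the squaring is exactly what is needed to control the holonomy step. For parts (1)--(2), by uniform continuity of $X$ there is a universal $\beta_0>0$ such that whenever $\beta<\beta_0$ and $x$ is regular, the vector field $X$ is nonvanishing on $B_{\beta\|X(x)\|}(x)$ and its unit direction stays within a prescribed small cone around $X(x)/\|X(x)\|$. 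This ensures $N^r_\beta(x)$ is transverse to the flow and that no singularity lies in the resulting tube; a standard argument then shows $(s,y)\mapsto X_s(y)$ is an embedding on $[-\beta\|X(x)\|,\beta\|X(x)\|]\times N^r_\beta(x)$, proving (1). Part (2) follows because any point of $B_{\beta\|X(x)\|}(x)$ decomposes uniquely as a short transverse displacement from $x$ followed by a short flow segment, both controlled by the near-constancy of $X$ on the tube.

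Part (3) is the heart of the lemma. Given $y\in N^r_{\beta/L^t\|X(x)\|}(x)$, so that $d(x,y)\le \beta\|X(x)\|/\tilde L^{2t}$, the derivative bound yields
\begin{equation*}
d(X_s(x),X_s(y))\;\le\;\tilde L^s\,d(x,y)\;\le\;\frac{\beta\|X(x)\|}{\tilde L^{2t-s}},\qquad 0\le s\le t.
\end{equation*}
Combining with $\|X(x)\|\le\tilde L^s\|X(X_s(x))\|$ and the inequality $2t-s\ge s$ valid for $s\le t$, I obtain $d(X_s(x),X_s(y))\le\beta\|X(X_s(x))\|$, which is exactly the rescaled-tube control claimed in the lemma. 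Hence the full segment $\{X_s(y):0\le s\le t\}$ never leaves the flow box around the orbit of $x$, so the orbit of $y$ meets the target transverse disk at $X_t(x)$ in a unique point, and this defines $P_{x,t}(y)$. Injectivity of $P_{x,t}$ is automatic from uniqueness of solutions together with transversality of the target disk.

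The main obstacle is exactly the rescaling bookkeeping just described: the spreading of distances (by up to $\tilde L^s$) and the potential shrinking of $\|X\|$ along the orbit (also by up to $\tilde L^s$) multiply rather than cancel, so a naive one-sided Lipschitz estimate needs a factor of $\tilde L^{2t}$ instead of $\tilde L^t$. Packaging both effects into the single constant $L=\tilde L^{2}$ is the device that makes the single rate $1/L^t$ in the domain of $P_{x,t}$ sufficient. Once this is set up, the rest is routine tubular-neighborhood machinery.
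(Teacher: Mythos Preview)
The paper does not prove this lemma at all; it is quoted verbatim from Wen--Wen \cite{WW} and used as a black box, so there is no ``paper's own proof'' to compare against. Your sketch is essentially the standard argument behind the Wen--Wen rescaled flow-box lemma, and the crucial step---absorbing both the exponential spreading of orbits and the possible exponential decay of $\|X\|$ along the orbit into a single constant $L=\tilde L^{2}$, so that the domain radius $\beta/L^{t}$ suffices for the holonomy in part (3)---is correct and is exactly the mechanism used in \cite{WW}.

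One caveat: your treatment of part (2) is a bit quick. The time interval in the statement is $[-\beta\|X(x)\|,\beta\|X(x)\|]$, and since the flow speed near $x$ is approximately $\|X(x)\|$, the tube extends only about $\beta\|X(x)\|^{2}$ in the flow direction, which for small $\|X(x)\|$ is strictly less than the radius $\beta\|X(x)\|$ of the ball. So as literally stated in this paper the inclusion in (2) need not hold; in \cite{WW} the time interval is of order $\beta$ rather than $\beta\|X(x)\|$, and this appears to be a transcription slip here. Your decomposition argument is fine once the time interval is taken to be of the correct order, but you should flag that the statement you are proving differs slightly from what is written.
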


    Let $\Lambda\subset M$ be an ASH attractor, with splitting $E^s\oplus E^c$ on $U$. Then, the stable manifold theorem says that every point in $U$ has immersed strong-stable manifold and  local strong-stable manifold which will be denoted as $W^{ss}(x)$ and $W^{ss}_{\varepsilon'}(x)$, $0<\varepsilon'<1$, respectively. In particular, given a regular point $x$, we denote its stable manifold and local stable manifold, respectively, by the sets  $$W^s(x)=\bigcup_{t\in \mathbb{R}}W^{ss}(X_t(x)) \textrm { and } W^s_{\varepsilon'}(x)=\bigcup_{t\in\mathbb{R}}W_{\varepsilon'}^{ss}(X_t(x)).$$   
Let $\eta>0$ be such that the exponential map $exp_x$ is injective for any $x\in M$. Fix $$0<\delta'\leq \min\left\lbrace\frac{\min\{\eta,\beta_0\}}{B},\varepsilon',\eta,\beta_0\right\rbrace,$$ where $$B=\sup\limits_{x\in M}\{\Vert X(x)\Vert\}.$$ 
By Lemma \ref{RFB}, we have that $N^r_{\delta'}(x)$ is a cross-section of time $\delta'\Vert X(x)\Vert$ for any regular point $x\in M$. Now fix $T>0$, $\delta''=\frac{\delta'}{L^T}$ and let us define for any regular point $x$ the set   $W^{r,s}_{\delta''}(x)$ to be the connected component that contains $x$ in the set $$W^s_{\varepsilon'}(x)\cap N^r_{\delta''}(x).$$

 Suppose that  $$d(X_t(x),X_{\theta(t)}(y))\leq \delta''\Vert X(X_t(x))\Vert,\quad \forall t\in\mathbb{R},$$ where $\theta$ is an increasing homeomorphism. Notice that this forbids $y\in W^{ss}_{\varepsilon'}(x)$ due to the exponential expansion of $W^{ss}_{\alpha}(x)$ in the past.  In addition, suppose that $y\notin \mathcal{O}(x)$. By the  Lemma \ref{RFB}, we can assume that   $X_{\theta(iT)}(y)$ belongs to $N^r_{\delta''}(X_{iT}(x))$, for any $i\in \mathbb{Z}$, unless doing a modification in $\theta$. Since $\delta''\Vert X(X_t(x))\Vert\leq \eta$ for any $t\in [0,T]$ one can fix a curve $\gamma^0_t:[0,1]\to N^r_{\delta''}(X_{t}(x))$ satisfying:
 
 \begin{itemize}
     \item $\gamma^0_t(0)=X_t(x)$ and $\gamma^0_t(1)=X_{\theta(t)}(y)$
     \item $\gamma^0_t(s)$ is transversal to  $W^{r,s}_{\delta''}(\gamma^0_t(s))$ for any $s\in [0,1]$.
     \item $d(\gamma^0_t(s),X_t(x))\leq \delta''\Vert X(X_t(x))\Vert$, for any $s\in [0,1]$.
     \item For any $s\in [0,1]$, the function $\gamma^0_t(s)$ varies continuously wit $t$. 
 \end{itemize} 
 This generates a smooth immersion $\rho^0:[0,T]\times[0,1]\to M$ defined by $\rho(t,s)=\gamma_t(s)$. Similarly, define for each $i\in \mathbb{Z}\setminus \{0\}$ an smooth immersion $\rho^i:[0,T]\times[0,1]\to M$ such that:

\begin{itemize}
    \item For each $i$, if we fix $t$, $\rho^{i}(t,s)=\gamma^i_t(s)$ is a curve transversal to $W^{r,s}_{\varepsilon'}(\gamma^i_{t}(s))$
    \item $\gamma^i_t(0)=X_{iT+t}(x)$ and $\gamma^i_t(1)=X_{\theta(iT+t)}(y)$
    \item $d(\gamma_t^i(s),X_{iT+t}(x))\leq \delta'' \Vert X(X_{iT+t}(x))\Vert$, for any $s\in [0,1]$.
    \item $\rho:\mathbb{R}\times [0,1]\to M$ defined by $\rho(t,s)=\rho^i(t',s)$ if $t=iT+t'$ is a smooth immersion.
\end{itemize}

Now that we have the smooth immersed surface $S=Img(\rho)$, we can define the $R$-tube-like domain. Fix some $t\in \mathbb{R}$ and let us denote $\gamma_t(s)=\rho(t,s)$. Recall that for any $t\in \mathbb{R}$, the curve $\gamma_t(s)$ is transversal to $W^{r,s}_{\varepsilon'}(\gamma_t(s))$, thus we define the  $R$-\textit{tube-like domain} for the surface $S$ as the set
\begin{displaymath}
T^r=\bigcup_{t\in \mathbb{R}, s\in[0,1]}W^{r,s}_{\varepsilon}(\gamma_t(s)),
\end{displaymath}
where $\varepsilon>0$ is as in the beginning of this section. 

\textbf{ Claim:} $T^r$ does not contains singularities.

This is an immediate consequence of Lemma \ref{RFB}. Indeed, since $\delta''\leq\beta_0$, then we have that $N^r_{\delta''}(X_t(x))$ does not contain singularities for any $t\in \mathbb{R}$. Since, any point of $T^r$ belongs to some of these $R$-cross-sections, then $T^r$ does not contains singularities.

As in \cite{AP} it is possible to show the following main properties: 
\begin{enumerate}
    \item The intersection of the $R$-tube $T^r$ with a cross section $\Sigma_{\sigma}^{i,\pm}$ belongs to one of the connected components of $\Sigma_{\sigma}^{i,\pm}\setminus W^s(Sing(X))$.
    \item The positive orbit of any point of $T^r$ remains inside the tube. 
\end{enumerate}
With these properties in mind we can state and prove the next lemma:
\begin{lemma}
Let $\gamma=\gamma_0^0(s)$, $s\in[0,1]$. There is $\widetilde{\delta}>0$ such that for every point $z\in\gamma$ there exists an increasing continuous function $\theta_z:\mathbb{R}^+\to\mathbb{R}$ such that 
\begin{displaymath}
d(X_t(x),X_{\theta_z(t)}(z))\leq \widetilde{\delta},\quad\forall t\geq 0. 
\end{displaymath}
\end{lemma}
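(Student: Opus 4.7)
The plan is to exploit the positive invariance of $T^r$ stated in property (2) above together with the stable-foliation structure of the $R$-tube. Since $\gamma\subset T^r$, any $z\in\gamma$ satisfies $X_t(z)\in T^r$ for all $t\geq 0$. The tube decomposes as $T^r=\bigcup_{\tau\in\mathbb{R},\, s\in[0,1]}W^{r,s}_\varepsilon(\gamma_\tau(s))$, and these stable leaves are transverse to the immersed surface $S=\mathrm{Img}(\rho)$, so each point of $T^r$ lies on a uniquely-determined leaf $W^{r,s}_\varepsilon(\gamma_{\tau(w)}(s(w)))$. The function $t\mapsto\tau(X_t(z))$ is the building block for the reparametrization $\theta_z$.

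The first step is to define $\phi_z:\mathbb{R}^+\to\mathbb{R}^+$ by $\phi_z(t)=\tau(X_t(z))$ and to show it is a strictly increasing homeomorphism. Continuity follows from continuity of the flow together with the continuous dependence of the stable foliation on $\tau$. Strict monotonicity follows because $X$ is transverse to the stable leaves inside $T^r$: this is embedded in the $R$-cross-section construction from Lemma \ref{RFB}(1), which guarantees that each $N^r_{\delta''}(X_t(x))$ is a genuine flow-transverse cross-section. Since $z\in\gamma_0$ forces $\phi_z(0)=0$, and the orbit $X_t(z)$ cannot stall in a bounded portion of $T^r$ (because the tube is singularity-free by the Claim and $z$ is regular), $\phi_z$ is surjective onto $\mathbb{R}^+$. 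One then sets $\theta_z:=\phi_z^{-1}$.

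With this definition, $X_{\theta_z(t)}(z)\in W^{r,s}_\varepsilon(\gamma_t(s(t)))$ for some $s(t)\in[0,1]$, so the triangle inequality gives
\begin{displaymath}
d(X_t(x),X_{\theta_z(t)}(z))\leq d(X_t(x),\gamma_t(s(t)))+d(\gamma_t(s(t)),X_{\theta_z(t)}(z)).
\end{displaymath}
The first summand is at most $\delta''\Vert X(X_t(x))\Vert\leq\delta'' B$ since $\gamma_t\subset N^r_{\delta''}(X_t(x))$, and the second summand is at most $\varepsilon$, the size of the local stable leaf. Therefore $\widetilde{\delta}:=\varepsilon+\delta'' B$ works uniformly for all $z\in\gamma$.

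The main obstacle is verifying that $\phi_z$ is a bona fide strictly increasing homeomorphism of $\mathbb{R}^+$; once this is in hand the rest is a straightforward triangle estimate using the tube's geometric data. Well-definedness and continuity rely on the transversality between the stable foliation and the surface $S$ that is built into the construction of $\rho$; monotonicity rests on Lemma \ref{RFB}(1), which guarantees that the cross-sections $N^r_{\delta''}$ are truly transverse to the flow; and surjectivity exploits both the singularity-freeness of $T^r$ and the regularity of $z$ to prevent its positive orbit from stalling at finite $\tau$.
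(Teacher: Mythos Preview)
Your approach is genuinely different from the paper's, and the idea of reading off a time-coordinate $\tau(w)$ from the leaf containing $w\in T^r$ is appealing. However, there is one substantive issue and two technical gaps.

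\textbf{The substantive issue.} The paper does not use the tube width at all; instead it constructs $\theta_z$ inductively by tracking successive hits of the orbit of $z$ on the cross-sections $N^r_{\delta''}(X_{iT}(x))$ and then invokes the uniform contraction rate $\lambda<1$ along strong-stable leaves to bound the accumulated drift by the geometric series $\sum_{j}\delta'\Vert X(X_{jT}(x))\Vert\,\lambda^{(i-j)T}\le \frac{\delta'K}{1-\lambda}$. This yields $\widetilde\delta=\frac{\delta'K}{1-\lambda}$, i.e.\ a constant \emph{proportional to} $\delta'$. That proportionality is precisely what Remark~3.6 (stated immediately after the lemma) exploits: one shrinks $\delta'$ to force $\widetilde\delta<\min\{\delta_0,\delta_1(\varepsilon_0),\delta_2(\varepsilon_1),\varepsilon_2,\delta_3\}$, and this is indispensable in the proof of Theorem~\ref{TeoP}. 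Your bound $\widetilde\delta=\varepsilon+\delta''B$ contains the fixed constant $\varepsilon$ from the contradiction hypothesis at the start of Section~3, which cannot be diminished by adjusting $\delta'$. So even if your argument were made rigorous it would prove the lemma \emph{as literally stated}, but not in the form the rest of the proof actually needs. The missing ingredient is the exponential contraction along $W^{ss}$, which you never invoke.

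\textbf{Technical gaps.} First, the leaves $W^{r,s}_\varepsilon(\gamma_\tau(s))$ for nearby parameters $(\tau,s)$ sit inside overlapping cross-sections $N^r_\varepsilon(\gamma_\tau(s))$ and need not be disjoint; the surface $S$ is moreover only immersed. So a point $w\in T^r$ does not lie on a \emph{uniquely} determined leaf, and the function $\tau(w)$ is not well defined without further work. Second, transversality of $X$ to the one-dimensional stable leaves (which is all Lemma~\ref{RFB}(1) gives) does not by itself yield strict monotonicity of $t\mapsto\tau(X_t(z))$: what is required is that $X$ be transverse to the two-dimensional time-slices $\bigcup_{s}W^{r,s}_\varepsilon(\gamma_\tau(s))$ and that it point in the direction of increasing $\tau$. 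This is plausible from the construction of $\rho$, but it is an additional argument, not a consequence of flow-box transversality.
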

\begin{proof}
We begin fixing some constants. Fix $K=\max_{x\in \gamma}{\Vert X(x)\Vert}$.  Let us take $T>0$ and $0<\lambda\leq 1$  such that for any $z\in W^{ss}_{loc}(p)$  and $p\in M$, we have: $$d(X_{iT}(z),X_{iT}(p))\leq \lambda^{t}d(z,p) \textrm{ for any } t\in [0,T].$$ 
Let us set $\widetilde{\delta}=\frac{\delta' K }{1-\lambda}$   and take some $z\in \gamma$. Recall that by the construction of the R-tube-like domain we have that $X_T(z)\in B_{\delta'}^r(X_{T}(x))$. 

Hereafter we will consider a $R$-tube-like domain through the orbit of $x$ with $T>1$. Thus we are able to find  some $t_1>0$ such that $z_1=X_{t_1}(z)\in N^r_{\delta}(X_{T}(x))$.
On the other hand, $z_1\in W^{ss}_{loc}(p_1)$ for some $p\in \gamma_{0}^1(s)=\gamma^1$. But, by construction of $\gamma_0^{1}(s)$, we have that $$d(X_1,p_1)\leq \delta''\Vert X(X_T(x))\Vert.$$

Now, continuing the previous process we see that there is $t_2>0$  such that $X_{t_2}(p)\in N^r_{\delta''}(X_{2T}(x))$, but this implies that
\begin{eqnarray*}
d(X_{t_2}(z_1),X_{2T}(x))&\leq& d(X_{t_2}(z_1),X_{t_2}(p))+ d(X_{t_2}(p),X_{2T}(x)\\
&\leq&\delta'\Vert X_{T}(x)\Vert\lambda^T + \delta'\Vert X_{2T}(x))\Vert.
\end{eqnarray*}
Inductively, for any $i>0$ we can find a time $t_i>0$ and a point $p_i\in \gamma_0^1$ such that $z_i=X_{s_i}(z)\in W^{ss}_{loc}(p_i)$, where $s=\sum_{j=1}^{i}t_j$, and satisfying:

$$d(X_{s_i}(z),X_{iT}(x))\leq \sum_{j=1}^{i-1}\delta'\Vert X_{jT}(x)\Vert\lambda^{(i-j)T}\leq \frac{\delta' K}{1-\lambda^T}\leq \widetilde{\delta}.$$       
Finally, defining $\theta_z:\mathbb{R}^+\to \mathbb{R}$ as $\theta_z(iT)=s_i$ and linear from the intervals $[i,i+1]$ to the intervals $[s_i,s_{i+1}]$, we get the desired function $\theta_z$.
\end{proof}

\begin{remark}
We can take $\delta'$ so that $\widetilde{\delta}<\min\left\lbrace \delta_0,\delta_1(\varepsilon_0),\delta_2(\varepsilon_1),\varepsilon_2,\delta_3\right\rbrace$. 
\end{remark}

Next, we are ready to prove the Theorem 2.4.

\begin{proof}[Proof of Theorem \ref{TeoP}]
Let $\varepsilon>0$ and $\delta_n\to 0$, $x_n, y_n\in\Lambda$ and $\theta_n:\mathbb{R}\to\mathbb{R}$ as in the beginning of this section, i.e, satisfying the relations \eqref{assumption1} and \eqref{assumption2}. As in \cite{AP}, there exits a regular point $z\in\Lambda$ and $z_n\in\omega(x_n)$ such that $z_n\to z$. Let $\Sigma_{\eta}$ an $\eta$-adapted cross section containing $z$ in its interior. Thus, the positive orbit of $x_n$ intersects $\Sigma_{\eta}$ infinitely many times for $n$ large enough. Besides, by using flow boxes in a small neighborhood of $\Sigma_{\eta}\cup\widetilde{\Sigma^{i,o,\pm}}$ we can to find positive numbers $\delta_5, t_0$ such that for any $\Sigma'\subset \Sigma_{\eta}\cup\widetilde{\Sigma^{i,o,\pm}}$, $z\in\Sigma'$ and $w\in M$ with $d(z,w)<\delta_5$, there is $t_w\leq t_0$ such that $w'=X_{t_w}(w)\in\Sigma'$ and $d_{\Sigma'}(z,w')<K'\delta_5$, where $d_{\Sigma'}$ is the intrinsic distance in $\Sigma'$, for some constant positive constant $K'$ which depends on $\Sigma_{\eta}\cup\widetilde{\Sigma^{i,o,\pm}}$.  

Now, fix $N\in\mathbb{N}$ such that $0<\delta_N<\min\left\lbrace\delta_4,\frac{\widetilde{\delta}}{K_m},\eta,\eta_0,\frac{\delta_5}{K}\right\rbrace$, where $\eta_0$, $\delta_4$ and $\widetilde{\delta}$ are given by Remark 3.3, Lemma 3.6 and Lemma 3.8 respectively. In this case, let consider $x=x_N$, $y=y_N$ and $\theta=\theta_N$. 

\textbf{Claim: }There is $s\in\mathbb{R}$ such that $X_{\theta(s)}(y)\in W_{\varepsilon}^{ss}(X_{[s-\varepsilon,s+\varepsilon]}(x))$.

Assume that $X_{\theta(s)}(y)\notin W_{\varepsilon}^{ss}(X_{[s-\varepsilon,s+\varepsilon]}(x))$ for any $s\in\mathbb{R}$. First, note that $y\notin\mathcal{O}(x)$, otherwise by Lemma 3.4 and Remark 3.4 we obtain that $X_{\theta(0)}(y)$ belongs to  $X_{[-\varepsilon,\varepsilon]}(x)$, contradicting \eqref{assumption2}. Second, by changing slightly the points $x$, $y$ and the function $\theta$, and by the choice of $\delta$, we can to fix a cross section $\Sigma'\subset\Sigma_{\eta}\cup\widetilde{\Sigma^{i,o,\pm}}$ such that $x\in\Sigma'$ and to find a sequence of arbitrarily large $C$-hyperbolic times $\lbrace t_n\rbrace_{n\geq 1}$ of $x$ such that $x_n=X_{t_n+r_n}(x)\in\Sigma'$, where $0\leq r_n<T_m$, with $T_m=\max\lbrace T_0,T_1 \rbrace$. Besides, we have that $y_n=X_{\theta(t_n+r_n)+v_n}(y)\in\Sigma'$, where $\vert v_n\vert\leq t_0$. Moreover, by shrinking $U$ if it is necessary, by Lemma 2.7 in \cite{AP} there is $\kappa_0>0$ such that 
\begin{equation}\label{cotabajo}
\ell(\gamma_n)\leq\kappa_0d_{\Sigma'}(x_n,y_n)\leq \kappa_0K'\delta_5,    
\end{equation}
where $\gamma_n$ is any curve joining $x_n$ and $y_n$, for every $n\geq 1$. 
According to the choice of $\delta$, we can to define a curve $\gamma'\subset\Sigma'$ whose points $z'$ are given by $z'=X_{v_z}(z)$, $\vert v_z\vert\leq t_0$, for $z\in\gamma$, where $\gamma$ is as in Lemma 3.8.

On the other hand, take
\begin{displaymath}
\kappa=\min_{(z,s)\in \overline{B_{\delta_5}(\Sigma')}\times [-t_0,t_0]}\vert\det DX_{s}(z)\vert_{E^c_z}\vert\cdot\min_{(z,s)\in\overline{U}\times [0,T_m]}\vert\det DX_{s}(z)\vert_{E^c_z}\vert>0.
\end{displaymath}
Let $\lambda>\kappa_0K'\delta_5$, and let $n_1$ large enough such that $\kappa e^{c_*t_{n_1}}>\lambda$. By the construction of the $R$-tube $T^r$, Lemma 3.8 and following the proof of Theorem A in \cite{AP} it is possible to construct a Poincar\'e return map $R$ define on the whole strip between the stable manifolds of $x$ and $y'$ inside $\Sigma'$ (the stable manifolds in $\Sigma'$ are defined by $W^s(z,\Sigma')=W_{loc}^s(z)\cap\Sigma'$, $z\in\Sigma'$), whose return time $s(\cdot)$ satisfies 
\begin{displaymath}
s(x)\approx t_{n_2}+T_m\quad s(z')\approx t_0+\theta_z(t_{n_2})+T_m,\quad\forall z'\in\gamma', 
\end{displaymath}
where $n_2>n_1$ is large enough. The Figure 1 illustrates the construction of this map. 
\begin{figure}[ht]
\includegraphics[scale=0.3]{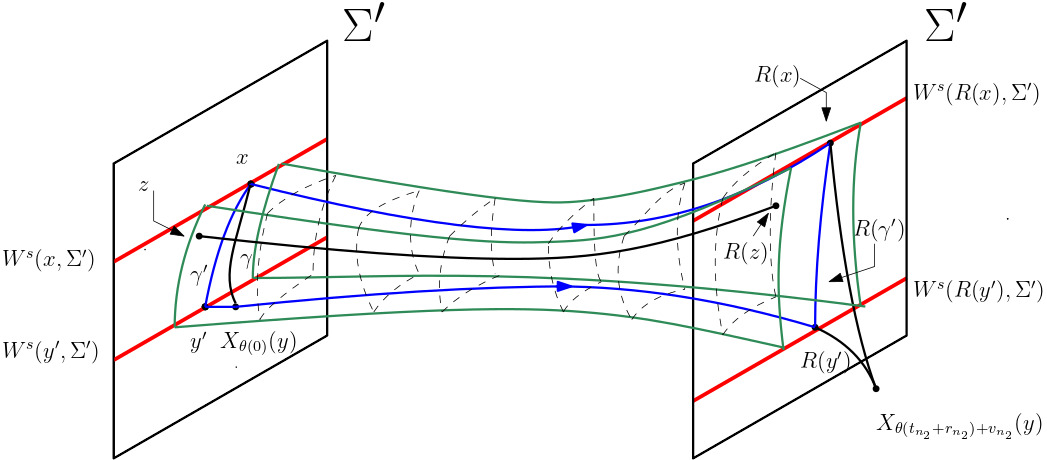}
\caption{The Poincar\'e map $R$.}
\end{figure}

By Lemma 3.8, Remark 3.6, and by following the proof of Lemma 3.6 we deduce, by shrinking $\varepsilon'$ if it is necessary, that the relation \eqref{expansion} is satisfied for any $z\in\gamma'$. So, by definition of $R$ we have that $R(\gamma')$ is a curve in $\Sigma'$ that connects $x_{n_2}$ with $y_{n_2}$ and satisfies
\begin{displaymath}
\ell(R(\gamma'))\geq ke^{c_*t_{n_1}}\geq \lambda> \kappa_0K'\delta_5,
\end{displaymath}
which contradicts \eqref{cotabajo}. So, we have that $y_{n_2}\in W^s(x_{n_2},\Sigma')$. Therefore, we deduce the claim by following step by step the argument given in \cite{AP}, p. 2456.   

The last part of the proof is entirely analogous to the showed by the authors in \cite{AP}, p. 2449. Here, we will briefly explain the main idea of their argument. Recall that since the bundle $E^c$ contains the flow direction, then the angle between $X$ and $E^{ss}$ is bounded away from zero. As a consequence, we have the following lemma:

\begin{lemma}[Lemma 3.2 in \cite{AP}]
There exist $\rho>0$ small and $c>0$, depending only on the flow, such that if $z_1,z_2,z_3$ are points in $\Lambda$ satisfying $z_3\in X_{[-\rho.\rho]}(z_2)$ and $z_2\in W^{ss}_{\rho}(z_1)$, then 
 $$d(z_1,z_3)\geq c\cdot\max\{d(z_1,z_2),d(z_2,z_3)\}  $$
\end{lemma}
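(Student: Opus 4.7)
The key idea---already emphasized by the authors---is that the central bundle $E^c$ contains the flow direction $X$, so the angle between $X(z)$ and $E^s_z$ is uniformly bounded below on $\Lambda$. The plan is to convert this angular transversality into the claimed metric estimate via the exponential chart at $z_2$ together with the law of cosines.

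First I would establish the angle bound. Because the partially hyperbolic splitting $E^s\oplus E^c$ is continuous on the compact set $\overline{U}$ and defines a direct sum at every point, there exists $\alpha_0>0$ with $\angle(E^s_z,E^c_z)\geq\alpha_0$ for all $z\in\overline{U}$. Since $X(z)\in E^c_z$ at every regular $z\in\Lambda$, this forces $\angle(X(z),E^s_z)\geq\alpha_0$ at every such $z$. If $z_2$ is a singularity, then $X(z_2)=0$ so $z_3=z_2$, and the inequality collapses to $d(z_1,z_2)\geq c\,d(z_1,z_2)$, trivially true for $c\leq 1$; hence we may assume $z_2$ is regular.

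Next, linearize at $z_2$: put $u=\exp_{z_2}^{-1}(z_1)$ and $v=\exp_{z_2}^{-1}(z_3)$. Since the local strong stable leaf through $z_2$ is a $C^1$ disk tangent to $E^s_{z_2}$ at $z_2$, for $\rho$ small one has $\angle(u,E^s_{z_2})\leq\alpha_0/4$. The Taylor expansion $v=s\,X(z_2)+O(s^2)$ for $|s|\leq\rho$ yields $\angle(v,\mathrm{span}\,X(z_2))\leq\alpha_0/4$ as well, so $\theta:=\angle(u,v)\geq\alpha_0/2$. With $a=\|u\|$, $b=\|v\|$, the law of cosines in $T_{z_2}M$ gives
\[
\|u-v\|^2 = a^2+b^2-2ab\cos\theta \geq \sin^2\theta\cdot\max\{a,b\}^2,
\]
since the quadratic $t\mapsto 1-2t\cos\theta+t^2$ attains its minimum $\sin^2\theta$ at $t=\cos\theta$. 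Pulling back through $\exp_{z_2}$ (bi-Lipschitz with distortion close to $1$ at scale $\rho$) delivers $d(z_1,z_3)\geq c\,\max\{d(z_1,z_2),d(z_2,z_3)\}$ with $c=\tfrac{1}{4}\sin(\alpha_0/2)$.

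The main subtlety is that the alignment $\angle(v,\mathrm{span}\,X(z_2))\leq\alpha_0/4$ requires $|s|$ to be small relative to $\|X(z_2)\|$, which fails when $z_2$ is very close to a singularity. In that regime, however, $d(z_2,z_3)\leq |s|\,\|X(z_2)\|+O(s^2)$ becomes negligible compared with $d(z_1,z_2)$, and the plain triangle inequality $d(z_1,z_3)\geq d(z_1,z_2)-d(z_2,z_3)\geq\tfrac{1}{2}d(z_1,z_2)$ handles the estimate directly. Uniformity of $\rho$ in $z_2\in\Lambda$---covering the $C^1$-tangency of the strong stable foliation, the Taylor remainder for the flow, and the exponential distortion---follows from compactness of $\Lambda$ together with the regularity of $X$ and of the invariant foliations.
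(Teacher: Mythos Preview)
The paper does not supply its own proof of this lemma: it is quoted from \cite{AP}, prefaced only by the remark that since $E^c$ contains the flow direction the angle between $X$ and $E^{ss}$ is bounded away from zero. Your argument is a correct fleshing-out of exactly that observation---linearize at $z_2$ and convert the uniform angle bound into the metric inequality via the law of cosines---so it matches the intended approach; the near-singularity worry you raise is in fact milder than you suggest, because the second-order remainder in $s\mapsto X_s(z_2)$ also carries a factor of $\Vert X(z_2)\Vert$ (through $\nabla_X X$), so the directional estimate on $v$ already holds uniformly in $z_2$.
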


Now, fix $\delta_N<\min\left\lbrace\delta_4,\widetilde{\delta}/K_m,\eta,\eta_0,\delta_5/K,c\rho/B\right\rbrace$, where $B=\sup\limits_{x\in M}\{\Vert X(x)\Vert\}$. By the choice of $\delta_N$, by the previous claim we have $X_{\theta(s)}(y)\in W^{ss}_{\varepsilon}(X_{[s-\varepsilon,s+\varepsilon](x))})$.  Since we have backward expansion in the strong-stable local manifolds, we can find the smallest $\eta>0$ such that:

$$d(X_{h(s)+\tau -\eta}(y),X_{s+\tau-\theta}(x))=\rho \textrm{ or }  d(X_{s -\eta}(y),X_{s+\tau-\theta}(y))=\rho,$$
but this combined with Lemma 3.9 implies that $$d(X_{s+\tau-\eta}(x),X_{h(s+\tau-\eta)}(y))\geq c\rho>B\delta_N\geq \delta_N\Vert X(X_{s+\tau-\eta}(x))\Vert,$$
which is a contradiction to \eqref{assumption1}. This concludes the proof of Theorem \ref{TeoP}.
\end{proof}

Next example shows that the assumption of $\Lambda$ be an attractor is not superfluous. Indeed, it exhibits a flow with an attracting ASH set  which neither transitive nor $R$-expansive.

\begin{example}
We start the construction of the example by considering the sphere $S^2$ and the classical Morse-Smale flow $Y_t$ for which the north-pole $N$ is a sink and the south pole  $S$ is a source. If one consider $S^2$ as an embedded surface on $\mathbb{R}^3$, this flow can be obtained as the flow generated by the gradient field $Y$ of the height function of the points in $S^2$. It is clear that $Y_t$ is not transitive.

\textbf{Claim:} $Y_t$ is not $R$-expansive.

For this purpose, we will simplify the setting assuming some properties on $Y$, without loss of generality. First, let us call $U_N$ and $U_S$ the Hartman-Grobman neighborhoods of the the sink and the source, respectively. Let us consider that the vector field  is linear $Y(x)=\alpha x$, with $\alpha<0$ if $x\in U_N$ and $Y(x)=-\alpha x$ if $x\in U_S$. Thus we have that the expression $Y_t(x)=e^{\alpha t}x$ is holds both for $t\geq 0$ with $x\in U_N$ and  $x\in U_S$ with $t\leq 0$. In  both cases one has $\Vert Y(Y_t(x))\Vert=\vert\alpha\vert\Vert e^{\alpha t} x \Vert$ (notice that here we are seeing $U_S$ and $U_N$ as small neighborhoods of the origin of $\mathbb{R}^2$).

If we take some other point $y\in U_N$ such that $\Vert y\Vert=\Vert x\Vert$, then we have that $\Vert e^{\alpha t}y\Vert=\Vert e^{\alpha t}x\Vert$ for any $t\geq 0$.  Thus we have that 
\begin{displaymath}
d(Y_t(y),Y_t)(x)=d(x,y)e^{\alpha t}.
\end{displaymath}
Now, fix some $\varepsilon>0$ and chose such $y$ satisfying 
\begin{displaymath}
d(x,y)\leq \varepsilon \Vert Y(x)\Vert.
\end{displaymath}
This implies that 
\begin{displaymath}
d(Y_t(y),Y_t(x))\leq d(x,y)e^{\alpha t}\leq  \varepsilon \Vert Y(x)\Vert e^{\alpha t}=\varepsilon\Vert Y_t(x)\Vert.
\end{displaymath}
The same holds if $x\in U_S$ and $t\leq 0$.

Since $A=S^2\setminus (U_N \cup U_S)$ is a compact set, we have that 
\begin{displaymath}
K=\inf_{x\in A} \{\Vert Y(x)\Vert\}>0.
\end{displaymath}
Fix $x\in U_S$. By the continuity of $Y_t$, if we take $y$ as above close enough to $x$ then we can guarantee that 
\begin{displaymath}
d(Y_t(x),Y_t(y))\leq\varepsilon\Vert Y(Y_t(x))\Vert
\end{displaymath}
until both the orbits of $x$ and $y$ enter $U_N$. Fix some $t_0>0$ such that above estimate holds and $Y_{t_0}(x)\in U_N$. If $y$ is close enough to $x$, we can to find a flight time $T_0$ close to $t_0$ such that $Y_{T_0}(y)$ at same distance of $N$ as $X_{t_0}(x)$. Thus, defining $\theta:\mathbb{R}\to \mathbb{R}$  as $$\theta(t)=\begin{cases} t, & t\leq 0\\
\frac{T_0}{t_0}t, & t\in [0,t_0] \\
T_0+t, & t\geq t_0,
\end{cases}   $$
we have the following estimate:
\begin{displaymath}
d(Y_t(x),Y_{\theta(t)}(y))\leq \varepsilon \Vert Y(Y_t)(x)\Vert,\quad\forall t\in\mathbb{R}.
\end{displaymath} 
Therefore, $Y_t$ cannot be $R$-expansive since $\varepsilon$ was taken arbitrary. This proves the claim. 

Once we have proved the claim we are able to construct the example. Let us consider the set $M=S^2\times [-1,1]$ and fix some $\alpha'<\alpha<0$. Define the vector field $X(x,s)=(Y(x),\alpha' s)$ and consider the flow $X_t$ generated by $X$. Is is clear that $S^2\times \{0\}$ is compact and invariant set for $X_t$. In addition, since $$X_t(x,s)=(Y_t(x),e^{\alpha' s}),$$ it is easy to see that $S^2\times\{0\}$ is an attracting set. Moreover, one can easily show that $S^2\times\{0\}$ is an asymptotically sectional hyperbolic set, but it is not  $R$-expansive by the previous claim.

\end{example}

Now we proceed to prove Theorem \ref{TeoSens}. 

\begin{proof}[Proof of Theorem \ref{TeoSens}.]

The proof will be by contradiction. Suppose that $X_t$ is not sensitive to the initial conditions. More precisely, assume that for every $\delta>0$ there is $x\in M$ and a neighborhood $N_x$ of $x$ such that for every $y\in N_x$ one has 
\begin{equation}\label{sensitive}
d(X_t(x),X_t(y))\leq\delta,\quad\forall t\geq0. 
\end{equation}
First, by Theorem 2.2 the singularities $\sigma$ on $\Lambda$ are hyperbolic of saddle type. So, by the Grobman-Hartman Theorem there is a neighborhood $U_{\sigma}$ of $\sigma$, a neighborhood $U_0$ of $0\in T_{\sigma}M$ and a homeomorphism $h:U_{\sigma}\to U_0$ that conjugates $X_t$ with the linear flow $L_t$. Moreover, this homeomorphism is H\"older continuous with H\"older exponent depending on the eigenvalues of $DX(\sigma)$, i.e., there is $C(\sigma)>0$ and $\alpha(\sigma)>0$ such that 
\begin{equation}\label{Holder}
    \Vert h(x)-h(y)\Vert\leq Cd(x,y)^{\alpha},\quad\forall x,y\in U_{\sigma}. 
\end{equation}
By shrinking the neighborhoods $U_{\sigma}$ if it is necessary, we denote 
\begin{displaymath}
C=\max_{\sigma\in Sing(X)\cap\Lambda}\lbrace C(\sigma)\rbrace\text{ and }\alpha=\min_{\sigma\in Sing(X)\cap\Lambda}\lbrace \alpha(\sigma)\rbrace.
\end{displaymath}
Besides, inside $U_0$, 
\begin{displaymath}
W^{ss}(0)=\lbrace (a,b,c): b=c=0\rbrace,\quad W^s(0)=\lbrace (a,b,c): b=0\rbrace
\end{displaymath}
and 
\begin{displaymath}
W^u(0)=\lbrace (a,b,c): a=c=0\rbrace.
\end{displaymath}

On the other hand, by shrinking $\widetilde{\Sigma^{i,o,\pm}}$ if it is necessary, we can assume that $V_{\sigma}\subset U_{\sigma}$. In particular, we have that  $\Sigma_{\sigma}^{i,o,\pm}\subset U_{\sigma}$ for every $\sigma\in\Lambda$. Moreover, note that, since $h$ is a homeomorphism, the family $\mathcal{F}=\lbrace h(D_n^i\cap\Sigma_{\sigma}^{i,\pm})\rbrace_{n\geq n_0}$ is a partition of $h(\Sigma_{\sigma}^{i,\pm})\subset U_{0}$. Thus, since $d(x,W^s(\sigma))\leq K'e^{-(\lambda_u+1)n}$ for $x\in D_n^i\cap\Sigma_{\sigma}^{i,\pm}$ for some $K'>0$ (see \cite{SYY}) and  $h(W^s(\sigma))=W^s(0)$, we have by \eqref{Holder} that 
\begin{eqnarray*}
d(h(x),W^s(0))&\leq& Cd(x,W^s(\sigma))^{\alpha}\\
&\leq &CK'e^{-\alpha(1+\lambda_u)n} \\
&=& e^{\left(\frac{\ln (CK')}{n}-\alpha(1+\lambda_u)\right)n}. 
\end{eqnarray*}
So, if $0<\varepsilon_0<\alpha(1+\lambda_u)$ and $n_0\geq 1$ satisfies $\vert\ln (CK')/n\vert<\varepsilon_0$ for every $n\geq n_0$, we have that
\begin{displaymath}
d(h(x),W^s(0))\leq e^{\rho n},\quad \forall x\in D_n^i\cap\Sigma_{\sigma}^{i,\pm},
\end{displaymath}
where $\rho=\varepsilon_0-\alpha(1+\lambda_u)<0$. In fact, since $\mathcal{F}$ is a partition of $h(\Sigma_{\sigma}^{i,\pm})$ we deduce that 
\begin{equation}\label{part}
e^{\rho(n+1)}\leq d(h(x),W^s(0))\leq e^{\rho n},\quad \forall x\in D_n^i\cap\Sigma_{\sigma}^{i,\pm},\forall n\geq n_0. 
\end{equation}
So, if $h(\Sigma_{\sigma}^{o,\pm})=\Sigma_0^{\pm}=\lbrace p=(\pm1,b,c):\vert b\vert,\vert c\vert<1\rbrace\subset U_0$, we have by \eqref{part} that the fligh time $\tau(p)$, $p\in h(\Sigma_{\sigma}^{i,\pm})$, to go from $h(\Sigma_{\sigma}^{i,\pm})$ to $\Sigma_0^{\pm}$ satisfies 
\begin{equation}\label{timexit}
    -\frac{\rho}{\lambda_u}n\leq\tau(p)\leq -\frac{\rho}{\lambda_u}(n+1). 
\end{equation}

Now, by following the proof of Lemma 3.4 we deduce that given $\beta>0$ there is $\varepsilon>0$ such that
\begin{displaymath}
v\in C', w\in h(\Sigma_{\sigma}^{i,\pm})\text{ satisfying } \Vert v-w\Vert<\varepsilon\text{ then } L_u(v)\in h(\Sigma_{\sigma}^{i,\pm})\text{ with }u\in(-\beta,\beta).
\end{displaymath}
Take a compact neighborhood $C'$ of $V_0$ inside $U_0$. By uniform continuity of $h$ on $h^{-1}(C')$, there is $\delta>0$ such that $\Vert h(z)-h(w)\Vert<\varepsilon$ if $z,w\in h^{-1}(C')$ satisfies $d(z,w)<\delta$.

In this case, let consider 
\begin{displaymath}
0<\beta<-\rho/\lambda_u\quad\text{ and }\quad 0<\varepsilon<1-e^{\rho+\lambda_u\beta},
\end{displaymath}
and let consider $x\in\Lambda$ and $y\in N_x$ satisfying the relation \eqref{sensitive}. Let $t\geq 0$ (if it exists) such that $X_t(x)\in D_n^i\cap\Sigma_{\sigma}^{i,\pm}$ for some $\sigma\in Sing(X)\cap\Lambda$ and $n\geq n_0$. The choice of $\delta$ implies that there is $s\in\mathbb{R}$ such that $y'=X_{s+t}(y)\in \Sigma_{\sigma}^{i,\pm}$. Moreover, both points $y'$ and $X_t(x)$ belongs to the same connected component of $\Sigma_{\sigma}^{i,\pm}\setminus\ell_{\pm}$.  

\textbf{Claim:} $y'\in \left( D_{n-1}^i\cap\Sigma_{\sigma}^{i,\pm}\right)\cup \left( D_n^i\cap\Sigma_{\sigma}^{i,\pm}\right)\cup \left(D_{n+1}^i\cap\Sigma_{\sigma}^{i,\pm}\right)$. 

Indeed, assume that $y'\in D_{n+k}^i\cap\Sigma_{\sigma}^{i,\pm}$ for some $k>1$. By the choice of $\delta$ there is $u\in(-\beta,\beta)$ such that 
\begin{displaymath}
h(y')=(a_0,b_0,c_0)=(e^{\lambda_{ss}u}a,e^{\lambda_uu}b,e^{\lambda_su}c)\in h(D_{n+k}^i\cap\Sigma_{\sigma}^{i,\pm}).
\end{displaymath}
Moreover, assume that $a,c>0$. Since $h(x)\in D_n^i\cap\Sigma_{\sigma}^{i,\pm}$ we have by \eqref{part} and \eqref{timexit} that
\begin{eqnarray*}
\varepsilon>\Vert  L_{\tau(h(x))}(h(x))-L_{\tau(h(x))}(h(y))\Vert&\geq& 1-e^{\lambda_u\tau(h(x))}a\\
&=&1-e^{-\lambda_uu}e^{\lambda_u\tau(h(x))}a_0\\
&\geq& 1-e^{\rho+\lambda_u\beta},
\end{eqnarray*}
which contradicts the choice of $\varepsilon$.
Therefore, by the above claim, we obtain the relation \eqref{L} and, in consequence, an estimate similar to \eqref{estimneighborhood} for some constants $K_1$ and $C_0$. So, the rest of the argument is similar that of proof of Theorem 2.4.
\end{proof}






\begin{thebibliography}{10}

\bibitem{ABS} Afraimovich V. S., Bykov V. V. and Shilnikov L. P. 1977 On the appearance and structure of
the Lorenz attractor \textit{ Dokl. Acad. Sci. USSR} \textbf{234} 336–339.

\bibitem {AP}  Araujo V., Pacifico M. J., Pujals E. R. and Viana M. Singular-hyperbolic attractors are chaotic. \textit{Trans. Amer. Math. Soc.}  \textbf{361}, 2431-–2485.

\bibitem{A} A. Artigue, Rescaled expansivity and separating flows, \textit{Discrete Contin. Dyn. Syst.},
38 (2018), 4433-4447.

\bibitem {B} Bautista S.; Morales C. Existence of periodic orbits for singular-hyperbolic sets. \textit{ Mosc. Math. J.} \textbf{6}, 265–-297.

\bibitem {BPV} C. Bonatti; A. Pumari\~no; M. Viana; Lorenz attractors with arbitrary expanding dimension, \textit{C. R. Acad. Sci. Paris Sér. I Math.} 325 (1997), no. 8, 883-–888.

\bibitem{BW} Bowen, R. and Walters, P. Expansive one-parameter flows, \textit{J. Differential Equations} 12 (1972), 180-193.

\bibitem{CSM}  Carrasco-Olivera D and San Mart\'in B, On the $\mathcal{K}^*$-expansiveness of the Rovella attractor \textit{Bull. Braz. Math. Soc} \textbf{48}, 649–-662.

\bibitem {G} Guckenheimer J., A strange, strange attractor.  The Hopf bifurcation and its applications \textit{Applied Mathematical Sciences} \textbf{19}.

\bibitem{K} Komuro M, Expansive properties of Lorenz attractors, \textit{The theory of dynamical systems and its applications to nonlinear problems} 4--26.

\bibitem{L1} S. Liao, Standard systems of differential equations, \textit{Acta. Math. Sinica}, 17 (1974),
100-109, 175-196, 270-295.

\bibitem{L2} S. Liao, Qualitative Theory of Differentiable Dynamical Systems, \textit{Science Press of
China} (1996). 
 
\bibitem {Me3}  Metzger R. J. and Morales C. A., Sectional-hyperbolic systems \textit{Ergodic Theory Dynam. Systems}. \textbf{28} 1587-–1597.

\bibitem {Mo3} Morales C. A., Pacifico M. J. and Pujals E. R., Singular hyperbolic systems \textit{Proc. Amer. Math. Soc}. \textbf{127}, 3393-–3401.

\bibitem {Mo4} Morales C. A. and San Mart\'in B., Contracting Singular Horseshoe \textit{Nonlinearity}. \textbf{30}, 4208--4219.

\bibitem{Ro} Rovella A., The dynamics of perturbations of the contracting Lorenz attractor, \textit{Bol. Soc. Brasil. Mat}. \textbf{24} 233-–259. 

\bibitem{SMV1} San Mart\'in B. and Vivas Kendry J., Asymptotically sectional-hyperbolic attractors, \textit{Discrete \& Continuous Dynamical Systems - A}. \textbf{39} (2019), 4057--4071. 

\bibitem{SMV2} San Mart\'in B. and Vivas K., The Rovella attractor is asymptotically sectional-hyperbolic, \textit{Nonlinearity}, \textbf{33} (2020), 3036--3049. 

\bibitem{SYY} Y. Shi, F. Yang and J, Yang, A countable partition for singular flows, and its application on the entropy theory, \textit{arXiv:1908.01380v1 [math.DS]}.

\bibitem{WY} X. Wen and Y. Yu, Equivalent definitions of rescaled expansiveness, \textit{J. Korean Math. Soc}. \textbf{55} (2018), 593--604.

\bibitem{WW} X. Wen and L. Wen, A rescaled expansiveness for flows, \textit{Trans. Amer. Math. Soc.} \textbf{371} (2019), 3179--3207
\end{thebibliography}
\end{document}